\newif\ifdraft
\numberwithin{figure}{section}
\DeclareFontFamily{OMS}{rsfs}{\skewchar\font'60}
\DeclareFontShape{OMS}{rsfs}{m}{n}{<-5>rsfs5 <5-7>rsfs7 <7->rsfs10 }{}
\DeclareSymbolFont{rsfs}{OMS}{rsfs}{m}{n}
\DeclareSymbolFontAlphabet{\scr}{rsfs}
\definecolor{labelkey}{gray}{0.5}
\tikzset{commutative diagrams/arrow style=math font}
\newlength{\myarrowsize} 
\newenvironment{diagram*}[2]{%
\[%
\begin{tikzpicture}[>=cmto,baseline=(current bounding box.center),%
	to/.style={->,font=\scriptsize,cap=round},%
	into/.style={cmhook->,font=\scriptsize,cap=round},%
	onto/.style={-cmonto,font=\scriptsize,cap=round},%
	math/.style={matrix of math nodes, row sep=#2, column sep=#1,%
		text height=1.5ex, text depth=0.25ex}]%
}{%
\end{tikzpicture}%
\]%
\ignorespacesafterend%
}
\DeclareMathOperator{\pr}{pr}
\newcommand{\wtilde}{\widetilde}
\newcommand{\hooklongrightarrow}{\lhook\joinrel\longrightarrow}
\newcommand{\theoremref}[1]{\hyperref[#1]{Theorem~\ref*{#1}}}
\newcommand{\lemmaref}[1]{\hyperref[#1]{Lemma~\ref*{#1}}}
\newcommand{\definitionref}[1]{\hyperref[#1]{Definition~\ref*{#1}}}
\newcommand{\propositionref}[1]{\hyperref[#1]{Proposition~\ref*{#1}}}
\newcommand{\conjectureref}[1]{\hyperref[#1]{Conjecture~\ref*{#1}}}
\newcommand{\corollaryref}[1]{\hyperref[#1]{Corollary~\ref*{#1}}}
\newcommand{\exampleref}[1]{\hyperref[#1]{Example~\ref*{#1}}}
\let\old@caption\caption
\renewcommand*{\caption}[1]{%
  \setcounter{figure}{\value{equation}}%
  \stepcounter{equation}%
  \old@caption{#1}\relax%
}
\newcounter{intro}
\newtheorem{intro-conjecture}[intro]{Conjecture}
\newtheorem{intro-corollary}[intro]{Corollary}
\newtheorem{intro-theorem}[intro]{Theorem}
\newcommand{\parref}[1]{\hyperref[#1]{\S\ref*{#1}}}
\newcommand*\if@single[3]{%
  \setbox0\hbox{${\mathaccent"0362{#1}}^H$}%
  \setbox2\hbox{${\mathaccent"0362{\kern0pt#1}}^H$}%
  \ifdim\ht0=\ht2 #3\else #2\fi
  }
\newcommand*\rel@kern[1]{\kern#1\dimexpr\macc@kerna}
\newcommand*\widebar[1]{\@ifnextchar^{{\wide@bar{#1}{0}}}{\wide@bar{#1}{1}}}
\newcommand*\wide@bar[2]{\if@single{#1}{\wide@bar@{#1}{#2}{1}}{\wide@bar@{#1}{#2}{2}}}
\newcommand*\wide@bar@[3]{%
  \begingroup
  \def\mathaccent##1##2{%
    \if#32 \let\macc@nucleus\first@char \fi
    \setbox\z@\hbox{$\macc@style{\macc@nucleus}_{}$}%
    \setbox\tw@\hbox{$\macc@style{\macc@nucleus}{}_{}$}%
    \dimen@\wd\tw@
    \advance\dimen@-\wd\z@
    \divide\dimen@ 3
    \@tempdima\wd\tw@
    \advance\@tempdima-\scriptspace
    \divide\@tempdima 10
    \advance\dimen@-\@tempdima
    \ifdim\dimen@>\z@ \dimen@0pt\fi
    \rel@kern{0.6}\kern-\dimen@
    \if#31
      \overline{\rel@kern{-0.6}\kern\dimen@\macc@nucleus\rel@kern{0.4}\kern\dimen@}%
      \advance\dimen@0.4\dimexpr\macc@kerna
      \let\final@kern#2%
      \ifdim\dimen@<\z@ \let\final@kern1\fi
      \if\final@kern1 \kern-\dimen@\fi
    \else
      \overline{\rel@kern{-0.6}\kern\dimen@#1}%
    \fi
  }%
  \macc@depth\@ne
  \let\math@bgroup\@empty \let\math@egroup\macc@set@skewchar
  \mathsurround\z@ \frozen@everymath{\mathgroup\macc@group\relax}%
  \macc@set@skewchar\relax
  \let\mathaccentV\macc@nested@a
  \if#31
    \macc@nested@a\relax111{#1}%
  \else
    \def\gobble@till@marker##1\endmarker{}%
    \futurelet\first@char\gobble@till@marker#1\endmarker
    \ifcat\noexpand\first@char A\else
      \def\first@char{}%
    \fi
    \macc@nested@a\relax111{\first@char}%
  \fi
  \endgroup
}
\DeclareMathAlphabet{\smallchanc}{OT1}{pzc}%
                                 {m}{it}
\DeclareFontFamily{OT1}{pzc}{}
\DeclareFontShape{OT1}{pzc}{m}{it}%
             {<-> s * [1.100] pzcmi7t}{}
\DeclareMathAlphabet{\mathchanc}{OT1}{pzc}%
                                 {m}{it}
\newcommand{\sE}{\mathscr{E}}
\newcommand{\sF}{\mathscr{F}}
\newcommand{\sO}{\mathscr{O}}
\newcommand{\sQ}{\mathscr{Q}}
\newcommand{\bC}{\mathbb{C}}
\newcommand{\bN}{\mathbb{N}}
\newcommand{\bP}{\mathbb{P}}
\newcommand{\bQ}{\mathbb{Q}}
\newcommand{\bR}{\mathbb{R}}
\newcommand{\bZ}{\mathbb{Z}}
\DeclareSymbolFont{largesymbolsA}{U}{jkpexa}{m}{n}
\DeclareMathSymbol{\varprod}{\mathop}{largesymbolsA}{16}
\newcommand{\LeftEqNo}{\let\veqno\@@leqno}
\newcommand{\properideal}%
        {\subsetneq}
\DeclareMathOperator{\rank}{{rank}}
\DeclareMathOperator{\red}{red}
\DeclareMathOperator{\supp}{{supp}}
\newcommand{\factor}[2]{\left. \raise .2em\hbox{\ensuremath{#1}\vphantom{$I^d$}}
\hskip -.1em \right/ \hskip -.4em \raise -.3em\hbox{\ensuremath{#2}}}%
\newcommand\mtimes[3]{{\varprod_{#1}^{#2}}_{\raise 1ex \hbox{\scriptsize #3}}}%
\def\dimcoh#1.#2.#3.{h^{#1}(#2,#3)}
\def\hypcoh#1.#2.#3.{\mathbb H_{\vphantom{l}}^{#1}(#2,#3)}
\def\loccoh#1.#2.#3.#4.{H^{#1}_{#2}(#3,#4)}
\def\dimloccoh#1.#2.#3.#4.{h^{#1}_{#2}(#3,#4)}
\def\lochypcoh#1.#2.#3.#4.{\mathbb H^{#1}_{#2}(#3,#4)}
\def\seslong#1.#2.#3.{0  \longrightarrow  #1   \longrightarrow 
 #2 \longrightarrow #3 \longrightarrow 0} 
\def\sesshort#1.#2.#3.{0
 \rightarrow #1 \rightarrow #2 \rightarrow #3 \rightarrow 0}
\def\Iff#1#2#3{
\hfil\hbox{\hsize =#1
\vtop{\noin #2}
\hskip.5cm 
\lower.5\baselineskip\hbox{$\Leftrightarrow$}\hskip.5cm
\vtop{\noin #3}}\hfil\medskip}
\newcommand{\union}\cup
\newcommand{\intersect}\cap
\newcommand{\Union}\bigcup
\newcommand{\Intersect}\bigcap
\def\myoplus#1.#2.{\underset #1 \to {\overset #2 \to \oplus}}
\newcommand{\resto}[1]{\raise -.5ex\hbox{$\vert$}_{#1}}
\newcommand\noin{\noindent}
\newcommand{\sectionsize}{} 
\newcommand{\theoremsize}{} 
\renewcommand{\subsectionautorefname}{\sectionsize\sf \subsectionautorefname}
\@ifdefinable\equationname{\let\equationname\equationautorefname}
\def\equationautorefname~#1\@empty\@empty\null{\protect{\theoremsize
    (#1\@empty\@empty\null)}}%
\@ifdefinable\AMSname{\let\AMSname\AMSautorefname}
\def\AMSautorefname~#1\@empty\@empty\null{
  ( #1\@empty\@empty\null)}%
\@ifdefinable\itemname{\let\itemname\itemautorefname}
\def\itemautorefname~#1\@empty\@empty\null{\theoremsize{%
    {#1}}\@empty\@empty\null%
}%
\newcommand{\basetheorem}[3]{%
    \newtheorem{#1}{#2}[#3]
    \newtheorem*{#1*}{#2}
    \expandafter\def\csname #1autorefname\endcsname{#2}
}%
\newcommand{\maketheorem}[3]{%
    \newaliascnt{#1}{#2}
    \newtheorem{#1}[#1]{\theoremsize #3}
    \aliascntresetthe{#1}
    \expandafter\def\csname #1autorefname\endcsname{#3}
    \newtheorem{#1*}{#3}
}%
\newcommand{\baseremark}[3]{%
    \newtheorem{#1}{#2}{#3}
    \newtheorem*{#1*}{#2}
    \expandafter\def\csname #1autorefname\endcsname{#2}
}%
\newcommand{\makeremark}[3]{%
    \newaliascnt{#1}{#2}
    \newtheorem{#1}[equation]{#3}
    \aliascntresetthe{#1}
    \expandafter\def\csname #1autorefname\endcsname{\theoremsize\sf #3}
    \newtheorem{#1*}{#3}
}%
\theoremstyle{plain}   
\theoremstyle{definition}    
\newcommand{\hide}[1]{}
\setlist[enumerate, 1]{font=\upshape}
\numberwithin{figure}{section}
\DeclareMathOperator{\Div}{Div}
\DeclareMathOperator{\Gal}{Gal}
\DeclareMathOperator{\Mov}{Mov}
\DeclareMathOperator{\N}{N}
\DeclareMathOperator{\nl}{NL}
\DeclareMathOperator{\NE^1}{NE^1}
\DeclareMathOperator{\Pic}{Pic}
\theoremstyle{plain}
\newtheorem{proposition}[prop]{Proposition}
\newtheorem{corollary}[cor]{Corollary}
\newtheorem{lemma}[lem]{Lemma}
\newtheorem{example}[ex]{Example}
\theoremstyle{definition}
\newtheorem{definition}[defini]{Definition}
\theoremstyle{remark}
\newtheorem{remark}[rem]{Remark}
\newtheorem{notation}[notat]{Notation}
\newtheorem{not-rem}[notr]{Notation-Remark}
\newtheorem{def-not}[defnot]{Definition-Notation}
\newtheorem{set-up}[set]{Set-up}
\newtheorem{claim}[clai]{Claim}
\setlist[enumerate]{label=(\thetheorem.\arabic*), before={\setcounter{enumi}{\value{equation}}}, after={\setcounter{equation}{\value{enumi}}}}
\numberwithin{equation}{theorem}
\let\amsmath@bigm\bigm
\renewcommand{\bigm}[1]{%
  \ifcsname fenced@\string#1\endcsname
    \expandafter\@firstoftwo
  \else
    \expandafter\@secondoftwo
  \fi
  {\expandafter\amsmath@bigm\csname fenced@\string#1\endcsname}%
  {\amsmath@bigm#1}%
}
\newcommand{\DeclareFence}[2]{\@namedef{fenced@\string#1}{#2}}
\DeclareFence{\mid}{|}
\begin{document}

\title[Chern class inequalities for non-uniruled varieties]
{Chern class inequalities for non-uniruled projective varieties} 

\author{Erwan Rousseau}
\address{Erwan Rousseau \\ Univ Brest\\
		CNRS UMR 6205
		\\Laboratoire de Mathematiques de Bretagne Atlantique\\ F-29200 Brest, France
		}
\email{\href{mailto:erwan.rousseau@univ-brest.fr}{erwan.rousseau@univ-brest.fr}}
\urladdr{\href{http://eroussea.perso.math.cnrs.fr/}{http://eroussea.perso.math.cnrs.fr/}}

\author{Behrouz Taji}
\address{Behrouz Taji, School of Mathematics and Statistics - Red Centre,
The University of New South Wales, NSW 2052 Australia}
\email{\href{mailto:b.taji@unsw.edu.au}{b.taji@unsw.edu.au}}
\urladdr{\href{https://web.maths.unsw.edu.au/~btaji/}{https://web.maths.unsw.edu.au/~btaji/}}

\keywords{Miyaoka-Yau inequalities, non-uniruled varieties, movable cycles, Zariski decomposition}

\subjclass[2010]{14E30, 14J70, 14B05.}


\setlength{\parskip}{0.19\baselineskip}


\begin{abstract}
 It is known that projective minimal models satisfy the celebrated 
 Miyaoka--Yau inequalities. 
 In this article, we extend these inequalities 
 to the set of all smooth, projective and non-uniruled 
 varieties. 
 \end{abstract}

\maketitle


\newcommand\hmarginpar[1]{}

\section{Introduction}
\label{subsect:intro}

Interconnections between topological, analytic and algebraic structures of compact complex 
varieties is a central theme in various branches of geometry and topology. 
Many of the classical results in this area use characteristic classes, in particular Chern classes to 
describe such connections. 
Apart from the celebrated Hizerbruch--Riemann--Roch theorems, 
prominent examples were found by Bogomolov \cite{Bog79} and---in a different direction---Yau \cite{MR0451180}, as 
a consequence of his (and Aubin's) solution to Calabi's conjecture. More precisely,  
he established that an $n$-dimensional 
compact K\"ahler manifold $(X,w)$ with $c_1(X)<0$ satisfies the inequality 
\begin{equation}\label{Yau}
\int_X  \big( 2(n+1) c_2(X)  - n c_1^2(X)  \big) \wedge w^{n-2}  \geq 0 .
\end{equation}
In a more general setting, using his generic semipositivity result, Miyaoka \cite{Miyaoka87} 
showed that 
any \emph{minimal variety}\footnote{Here minimal is in the sense of the minimal model program, that is $K_X$ is assumed to be 
nef, with $X$ having only terminal singularities.} $X$ satisfies the inequality  
\begin{equation}\label{Miyaoka}
\big( 3 c_2 (X) - c_1^2(X) \big)  \cdot H^{n-2}  \geq 0,
\end{equation}
for every ample divisor $H\subset X$. The combination of the two inequalities \autoref{Yau}, \autoref{Miyaoka}
and their analogues 
are nowadays referred to as the 
\emph{Miyaoka--Yau inequalities}.

 The purpose of this article is to establish that, as long as $X$ is not covered by rational curves, 
 it satisfies a Chern class inequality generalizing \autoref{Miyaoka}. 
 Throughout this paper all varieties will be over $\bC$.

\begin{theorem}\label{thm:main}
Let $X$ be a smooth, projective and non-uniruled variety of dimension $n$, 
and $H$ any ample divisor. There is a decomposition $K_X = P+N$ into $\bQ$-divisors, 
with $P\cdot N \cdot H^{n-2}=0$, 
such that
\begin{equation}\label{eq:MY1}
\big( 3c_2(X) - c_1^2(X)  \big)\cdot H^{n-2} \geq N^2 \cdot H^{n-2}  .
\end{equation}
Moreover, we have $N=0$, when $K_X$ is nef. 
\end{theorem}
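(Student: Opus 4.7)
My plan is to reduce the stated inequality to a Miyaoka-type bound for the positive part of a Zariski-type decomposition of $K_X$, and to deduce the latter from Miyaoka's generic semipositivity theorem for non-uniruled varieties.

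Since $X$ is non-uniruled, $K_X$ is pseudo-effective by the Boucksom-Demailly-Paun-Peternell theorem. I would construct the decomposition $K_X = P + N$ as follows. Choose a very general complete-intersection surface $S = D_1 \cap \cdots \cap D_{n-2}$ with $D_i \in |mH|$ for $m \gg 0$, so that $S$ is smooth and irreducible. The restriction $K_X|_S$ is pseudo-effective and admits a classical Zariski decomposition $K_X|_S = P_S + N_S$, with $P_S$ nef, $N_S$ effective, $P_S \cdot N_S = 0$, and the intersection form on the support of $N_S$ negative-definite. A Grothendieck-Lefschetz type argument on the restriction of Weil divisor classes, combined with this negative-definiteness, should yield an effective $\bQ$-divisor $N$ on $X$ satisfying $N|_S = N_S$, whose components are the unique irreducible divisors on $X$ restricting to the components of $N_S$. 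Setting $P := K_X - N$, the orthogonality $P \cdot N \cdot H^{n-2} = 0$ follows from $P_S \cdot N_S = 0$ since $[S] = m^{n-2} H^{n-2}$, and similarly $N^2 \cdot H^{n-2} \leq 0$ by negative-definiteness.

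For the inequality, expanding $c_1(X)^2 = (P + N)^2$ and applying orthogonality give $c_1^2(X) \cdot H^{n-2} = (P^2 + N^2) \cdot H^{n-2}$, so the target inequality is equivalent to
\[
\big( 3c_2(X) - P^2 - 2N^2 \big) \cdot H^{n-2} \geq 0.
\]
Since $N^2 \cdot H^{n-2} \leq 0$, it suffices to establish the Miyaoka-type bound
\[
\big( 3c_2(X) - P^2 \big) \cdot H^{n-2} \geq 0,
\]
playing the role of Miyaoka's inequality with $P$ replacing $K_X$. I would deduce this by combining Miyaoka's generic semipositivity theorem -- which, since $X$ is non-uniruled, asserts that $\Omega_X^1$ is nef on a general complete-intersection curve in the class $H^{n-1}$ -- with a Bogomolov-type inequality for the $H$-semistable subquotients of $\Omega_X^1$. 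Following Miyaoka's original strategy, one twists an appropriate symmetric power of $\Omega_X^1$ by $-\tfrac{1}{n} P$ so that the Bogomolov discriminant of the twist equals $\big( 3c_2(X) - P^2 \big) \cdot H^{n-2}$ up to a positive scalar; generic nefness then delivers the nonnegativity. The \emph{moreover} clause follows at once: if $K_X$ is nef, then $K_X \cdot N \cdot H^{n-2} \geq 0$, while this equals $(P + N) \cdot N \cdot H^{n-2} = N^2 \cdot H^{n-2} \leq 0$; hence $N^2 \cdot H^{n-2} = 0$, forcing $N_S = 0$ on $S$ by negative-definiteness, and thus $N = 0$.

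The principal obstacle is the first step: producing a global effective $\bQ$-divisor $N$ on $X$ that genuinely restricts to $N_S$ on $S$, and establishing the orthogonality $P \cdot N \cdot H^{n-2} = 0$ as a global intersection on $X$. Concretely, one must show that the curves supporting $N_S$ -- which have negative self-intersection on $S$ -- arise as restrictions of irreducible divisors on $X$ (a Lefschetz/Noether-type question), and that the positive part behaves well under intersection with $H^{n-2}$. A secondary difficulty is the transfer of Miyaoka's inequality from $K_X$ to the Zariski positive part $P$, which probably requires working with $H^{n-2}$-semistability (in the spirit of Greb-Kebekus-Peternell) rather than classical slope-semistability, and exploits the fact that $P$ is $H$-movable by construction.
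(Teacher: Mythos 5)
Your construction of the decomposition $K_X = P + N$ via Zariski decomposition on a general complete-intersection surface $S$ and a Noether--Lefschetz extension to $X$ matches the paper's strategy (Proposition~\ref{prop:Moish}, proved via Moishezon's theorem), and your reduction to the inequality $(3c_2(X) - P^2)\cdot H^{n-2} \geq 0$ is correct. You also correctly flag the extension step as the principal obstacle. However, the mechanism you propose for establishing $(3c_2 - P^2)\cdot H^{n-2} \geq 0$ has two genuine gaps.

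First, you work with $H^{n-1}$-semistability and Miyaoka's original generic semipositivity, but this polarization cannot produce the $P^2$ term in the final estimate. Already in Miyaoka's own treatment of the minimal case the Harder--Narasimhan filtration of $\Omega^1_X$ is taken with respect to the mixed class $K_X\cdot H^{n-2}$ (not $H^{n-1}$), precisely because one needs the slopes $\alpha_i$ of the quotients, normalized by $K_X^2\cdot H^{n-2}$, to sum to $1$ and to be compared to each other and to $K_X^2$ via Hodge index. Here the analogous class is $\gamma = [P\cdot H^{n-2}]$, which is only \emph{movable}. Switching to this polarization then requires (a) generic semipositivity of $\Omega^1_X$ with respect to movable classes---this is Campana--P\u{a}un \cite{CP16}, not Miyaoka's theorem---and, more subtly, (b) a Bogomolov inequality for sheaves semistable with respect to $\gamma$, which does \emph{not} hold for arbitrary movable classes. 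The key technical point you are missing is that $\gamma$ lies in the set $B^+_H(X)$ of Langer \cite{Langer04a}*{Thm.~3.4}, because the positive part $P$ is nef and \emph{big} on $S$; only for this restricted class does one recover a usable Bogomolov inequality for the semistable quotients. Your ``twist by $-\tfrac1n P$'' suggestion does not substitute for this: the Bogomolov discriminant $\Delta_B$ is invariant under twisting by a line bundle, so twisting $\Omega^1_X$ (or a symmetric power) cannot turn $2n c_2 - (n-1)c_1^2$ into $3c_2 - P^2$; passing from the discriminant to the $3c_2 - c_1^2$ form is exactly what the Harder--Narasimhan/Hodge-index argument accomplishes.

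Second, when $K_X$ is merely pseudo-effective (not of general type) the positive part $P$ need not be big, so $[P\cdot H^{n-2}]$ need not lie in $B^+_H(X)$ and the entire semistability machinery breaks down. The paper handles this by replacing $(X,0)$ with the orbifold pairs $(X, \tfrac1m A)$, which are of log-general type, proving the orbifold inequality there, and letting $m\to\infty$ using continuity of orbifold Chern numbers and a continuity statement for the negative parts $N_\sigma(D + \tfrac1m A)$. Your proposal does not address this case at all, even though the theorem is stated for all non-uniruled $X$.
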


A few remarks about the statement of \autoref{thm:main}. 
First, we note that it is well-known that, for a non-uniruled variety, Chern class inequalities of the form 
\autoref{eq:MY1} cannot be gleaned 
from the ones for its minimal models, when they exist. 
Second, as is evident from the statement of the theorem, the quantity on the right-hand-side 
of \autoref{eq:MY1} (which we may 
think of as an error term) 
is forced on us by the \emph{negative part} of  
Zariski decomposition. More precisely, given a compete intersection surface $S\subset X$ defined by 
very general members of very ample linear systems, 
the divisor $N$ is an extension of the negative part of the Zariski decomposition for $K_X|_S$
(see \autoref{sect:Section3-Moving} for the details). This 
extension is in the sense of the Noether-Lefschetz-type theorems (\autoref{prop:Moish}). 

\autoref{thm:main} is a special case of the following more general result that we obtain in this article, 
 which is in fact needed for the proof of \autoref{thm:main}. 

\begin{theorem}\label{thm:main2}
Let $(X,D)$ be a log-smooth pair of dimension $n$, with $D$ being a rational divisor. 
Assume that $H$ is an ample divisor. 
If $K_X+D$ is pseudo-effective, then, there is a decomposition 
$K_X+D = P+N$, that is $H$-orthogonal in the sense that $P\cdot N\cdot H^{n-2 }=0$, and 
for which the inequality 
\begin{equation}\label{eq:MY2}
\big(  3\widehat c_2(X,D)  - \widehat c_1^2 (X,D)   \big)  \cdot H^{n-2}  \geq  N^2\cdot H^{n-2} 
\end{equation}
holds. Furthermore, when $K_X+D$ is nef, we have $N=0$.
\end{theorem}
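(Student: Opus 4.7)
The plan is to reduce the statement to a two-dimensional problem by cutting with very general complete intersections of a high multiple of $H$, produce the decomposition $K_X + D = P + N$ from a Zariski decomposition on the surface slice, and lift both the decomposition and the resulting Chern class inequality back to $X$ through a Noether-Lefschetz-type extension.

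First I would fix $m \gg 0$ and choose very general $H_1, \ldots, H_{n-2} \in |mH|$ so that $S := H_1 \cap \cdots \cap H_{n-2}$ is smooth, irreducible, and meets the support of $D$ transversally; in particular $(S, D|_S)$ is log smooth. Since $K_X + D$ is pseudo-effective, its restriction $L_S := (K_X + D)|_S$ is pseudo-effective on $S$, and therefore admits a Zariski decomposition $L_S = P_S + N_S$, with $P_S$ nef, $N_S$ effective with negative definite intersection form on its components, and $P_S \cdot N_S = 0$. By the Noether-Lefschetz-type extension alluded to in the introduction, both $P_S$ and $N_S$ are the numerical restrictions of $\bQ$-divisors $P$ and $N$ on $X$, yielding a decomposition $K_X + D = P + N$. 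The $H$-orthogonality is then immediate from the projection formula, $P \cdot N \cdot H^{n-2}$ being a positive multiple of $P_S \cdot N_S = 0$.

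The substantive step is the Chern class inequality, which by the projection formula is equivalent, up to a positive factor, to the surface inequality $\bigl(3\widehat{c}_2(X, D) - \widehat{c}_1^2(X, D)\bigr)|_S \geq N_S^2$. I would establish this via a logarithmic Miyaoka-Yau-Bogomolov inequality for the pair $(S, D|_S)$, combining the generic semipositivity of the (orbifold) cotangent sheaf of the non-uniruled variety $X$---which descends to $S$ for very general $H_i$---with Bogomolov's discriminant bound applied to the Harder-Narasimhan pieces of this sheaf with respect to a polarization adapted to $P_S$. The negative part $N_S$ enters as a correction term measuring the failure of $L_S$ to be nef, with its precise shape $N_S^2$ arising from the orthogonal splitting $L_S^2 = P_S^2 + N_S^2$.

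The principal obstacle is the Miyaoka-Yau step itself, since no global (log) minimal model of $(X, D)$ is available in this generality; one must work directly on the surface slice, where Zariski decomposition is always at hand, and quantify the discrepancy between the orbifold second Chern number of $(X, D)$ and that of the notional nef model obtained by contracting the support of $N_S$. When $K_X + D$ is already nef, its restriction to a very general $S$ is nef as well, so the uniqueness of Zariski decomposition forces $N_S = 0$, and hence $N \equiv 0$ on $X$, establishing the final assertion.
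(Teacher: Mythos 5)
Your proposal captures the broad architecture — Noether--Lefschetz extension of the surface Zariski decomposition to produce $P$ and $N$ on $X$, followed by a Bogomolov-type argument for the orbifold cotangent sheaf with the Harder--Narasimhan filtration taken against a polarization built from the positive part. This is indeed the route the paper takes. There is, however, one genuine gap and one unstated technical ingredient that carries a good deal of the weight.

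The gap: you run the Bogomolov step with ``a polarization adapted to $P_S$'' while treating the pseudo-effective case directly. But a Bogomolov inequality with respect to a general movable $1$-cycle is simply not available. The inequality that is available (Langer, \cite[Thm.~3.4]{Langer04a}; recalled as \autoref{thm:Langer}) requires semistability with respect to a cycle of the form $[A\cdot H^{n-2}]$ with $A$ lying in the cone $K^+_H(X)$, i.e.\ with $A^2\cdot H^{n-2}>0$ and $A\cdot H^{n-1}>0$. For the class $\gamma=[P\cdot H^{n-2}]$ to land in $B^+_H(X)$ one therefore needs $P$ to be big, which means $K_X+D$ must be big, not merely pseudo-effective. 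If $K_X+D$ is pseudo-effective but not big — e.g.\ numerically trivial — then $P_S^2=0$ and the whole Langer/Bogomolov mechanism collapses; there is no destabilizing subsheaf criterion and no discriminant bound. This is precisely why the paper first proves the inequality under the additional hypothesis that $K_X+D$ is big (where $P$ is automatically nef and big, so $[P\cdot H^{n-2}]\in\overline{\Mov}_1(X)\cap B^+_H(X)$ by \autoref{prop:star}), and then handles the general pseudo-effective case by perturbing to $(X, D+\tfrac1m A)$, which is of log-general type, and passing to the limit $m\to\infty$. That limit step is not automatic: it uses the continuity of orbifold Chern numbers and a separate lemma (the display \autoref{eq:N2}) showing that the Nakayama negative parts $N_\sigma(D+\epsilon A)$ converge to $N_\sigma(D)$. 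Your proof as written never separates these cases, so it implicitly asserts a Bogomolov inequality it has no right to.

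A secondary point: in the ``substantive step'' you say the inequality should be proved ``for the pair $(S,D|_S)$,'' but the Harder--Narasimhan analysis actually takes place on the adapted cover $Y\to X$ for the full-dimensional orbifold cotangent sheaf $\Omega^1_{(Y,f,D)}$, using Campana--P\u{a}un semipositivity with respect to $f^*\gamma$; the surface only re-enters at the end via the Hodge index theorem on a cut $\widehat S\subset Y$ and via the Bogomolov--Sommese/Miyaoka dichotomy for the low-rank piece $\sE_1$ restricted to $\widehat S$. One cannot simply quote ``a logarithmic Miyaoka--Yau inequality for the surface pair,'' since there is no nef canonical model of $(S, D|_S)$ available either, and the quantity being bounded is the orbifold Chern number of the $n$-dimensional pair, not of a surface pair. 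These details are where Miyaoka's rank-by-rank analysis (reproduced in the paper's Step~1 and Step~2) actually lives, and they are not interchangeable with a black-box surface inequality.

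Finally, the smaller bookkeeping point: the paper cuts with $n-3$ members of $|H|$ and only one member of $|mH|$, which keeps the movable cycle $[P\cdot H^{n-2}]$ in the correct normalization relative to the cone $B^+_H(X)$. Taking all cuts in $|mH|$ introduces a harmless scaling, but you should be explicit about it since $B^+_H$ is defined relative to $H$, not $mH$.
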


The Chern classes $\widehat c_i(\cdot)$ 
in \autoref{thm:main2} are in the sense of orbifolds (see \autoref{eq:OC}), and 
when $D$ is reduced, they coincide with the usual notion of Chern classes.

\subsection{General strategy of the proof} 
For simplicity we will focus mostly on \autoref{thm:main}; the case where $D=0$.

As was observed by Miyaoka \cite{Miyaoka87} and later on Simpson \cite{MR944577}, 
it is sometimes possible 
to use the Bogomolov 
inequality \cite{Bog79} to establish Miyaoka--Yau inequalities. 
But the Bogomolov inequality is generally valid  
when the polarization is defined by ample or nef divisors, which is applicable---for the purpose of Miyaoka--Yau 
inequalities---when 
the variety is minimal. But for a general non-uniruled variety $X$ no such polarization exists. 
On the other hand, we show in the current article that, thanks to the result of Boucksom--Demailly--P\u{a}un--Peternell 
\cite{BDPP}, 
after cutting down by hyperplanes, 
the above divisor $P$ defines a so-called 
\emph{movable cycle}; a potentially natural choice for a polarization.  
But in general there is no topological Bogomolov-type inequality 
for sheaves that are semistable with respect to a movable class $\gamma$. 
At best, assuming that $\sE$ is locally free, one can use 
a Gauduchon metric $w_G$ constructed in \cite[Append.]{CP11}, 
with $w_G^{n-1}\equiv \gamma$, and Li--Yau's result on the 
existence of Hermitian-Einstein metrics \cite{LY15}
to establish the Bogomolov inequality with respect to $w_G^{n-2}$. But since $w_G^{n-2}$ is not closed, 
this would not yield a topological inequality. 
However, thanks to a fundamental 
result of Langer \cite[Thm.~3.4]{Langer04a}, 
semistability with respect to 
a certain subset 
of movable classes \emph{does} lead to the classical Bogomolov inequality. 
Having this important fact in mind, we use the definition 
of the Zariski decomposition to show that the intersection of $P$ with $H^{n-2}$ belongs to 
this smaller subset, as long as $X$ is of general type. 
With this observation, and using further properties of $P$, we then 
show that, thanks to Campana-P\u{a}un's result on positivity properties of the (log-)cotangent sheaf with 
respect to movable cycles \cite{CP16}, much of Miyaoka's original approach can then be 
adapted to establish \autoref{eq:MY1}.

In the more general setting of non-uniruled varieties, that is when $K_X$ is pseudo-effective
\cite{BDPP}, given 
an ample divisor $A$ and any $m\in \bN$,
we consider the pair $(X, \frac{1}{m} A)$, which is now of log-general type. Here, the log-version of 
\autoref{thm:main} is needed, forcing us to resort to orbifolds (in the sense of Campana) 
and their Chern classes as was defined in \cite[Sect.~2]{GT16}, following \cite{MR717614}. 
With the inequality \autoref{eq:MY2} at hand, one can then extract the inequality \autoref{eq:MY1} 
through a limiting 
process, which 
is  
reminiscent of \cite{GT16}, but employed for somewhat different reasons. 

\subsection{Related results}
Chern class inequalities for surfaces and their connection to the Zariski decomposition 
was first studied by Miyaoka in \cite{Mi84} and later on by Wahl \cite{Wah94}, 
Megyesi \cite{Meg99}, Langer \cite{La01} and others. 
In higher dimensions, when $K_X+D$ is movable and $\dim X=3$, 
the inequality \autoref{eq:MY2} is established in \cite{RT16}
for (mildly) singular pairs. 
Under the assumption that $K_X+D$ is nef and big, such Chern class inequalities have a rich 
history and were discovered by 
Kobayashi \cite{Kob84}, Tsuji \cite{MR0944606}, Tian \cite{Tia94}, 
to name a few. More recently, and in a more general setting, they have been studied in joint papers 
with Greb--Kebekus--Peternell \cite{GKPT15} and with Guenancia \cite{GT16}.
Further results have been established by Deng \cite{Deng21} and Hai--Schreieder \cite{HS20}.  
Finally, we note that the methods that we use in this article show that 
coefficient of $N$ in \autoref{eq:MY2} can be sharpened. In \autoref{sect:Section4-Inequalities} we make 
some predictions about possibly optimal versions of \autoref{thm:main2}.

\subsection{Acknowledgements} 
We like to thank Brian Lehmann and Henri Guenancia for answering our questions and helpful comments.

\section{Preliminaries}
\label{sect:Section2-Restriction}

In this section we review certain cones of divisors and curves that are needed for the rest of 
the article. 
Relevant notions of (slope) stability, orbifolds and Chern classes will also be introduced.  
 
 By a variety we mean a reduced, irreducible complex scheme of finite type.  
Given a variety $X$ of dimension $n$, by $\Div(X)$ and $\Pic(X)$ we denote the group of Cartier divisors 
and isomorphism classes of line bundles, respectively. 
$\N^1(X)$ denotes the N\'eron-Severi group consisting of numerical classes of elements 
of $\Div(X)$, i.e. $\N^1(X) = \Div(X) / \equiv$. 
We use $\Div(X)_{\bQ}$, $\N^1(X)_{\bQ}$ to 
denote $\Div(X)\otimes \bQ$ and $\N^1(X)\otimes \bQ$, respectively. 
The spaces $\Div(X)_{\bR}$ and $\N^1(X)_{\bR}$ are similarly defined. 
We recall that via intersection products $\N^1(X)$ is dual to the Abelian group 
of classes of curves $\N_1(X)$, which extends to cycle classes with rational or real coefficients 
(see \cite{Laz04-I}*{Sects.1.1,1.3} for more details). The notation $A^i(X)$ (and $A_i(X)$) 
refers to the Chow group of 
$i$-cocycles (resp. $i$-cycles) in $X$.

\begin{notation}
Given $D_i \in \Div(X)_{\bQ}$, for $1\leq i \leq n-1$, by $[D_1\cdot \ldots \cdot D_{n-1}]$ we denote the $1$-cycle 
in $N_1(X)_{\bQ}$ canonically defined by $D_i$'s, i.e. the image of 
$(c_1(\sO_X(D_1) \cdot \ldots \cdot c_1( \sO_X(D_n) )  \cap  [X] )\in A_1(X)\otimes \bQ$ under 
the cycle map, with $[X]$ denoting the fundamental cycle. 
\end{notation}
 
\subsection{Cones of curves and divisors, and stability notions}
Assuming that $X$ is projective, 
let $\NE^1(X)_{\bQ}\subset \N^1(X)_{\bQ}$ denote the convex cone of classes of effective $\bQ$-divisors 
and set $\overline{\NE^1}(X)_{\bQ}$ to be its closure, called the pseudo-effective cone. 

We define the notion of slope stability in the following general setting.

\begin{definition}[Slope stability]
\label{def:slope}
Given a torsion free sheaf $\sF$ on a smooth 
projective variety $X$ and $\gamma\in \N_1(X)_{\bQ}$, we define the slope $\mu_{\gamma}(\sF)$ of $\sF$ 
with respect to $\gamma$ by $\frac{1}{\rank(\sF)} c_1(\sF) \cdot \gamma$. 
A torsion free sheaf $\sE$ is said to be stable (or semistable) with 
respect to $0\neq \gamma$, if $\mu_{\gamma}(\sF) < \mu_{\gamma}(\sE)$ 
(resp. $\mu_{\gamma}(\sF) \leq \mu_{\gamma}(\sE)$), for every nontrivial
torsion free subsheaf $\sF \subset \sE$.
\end{definition}

Through the Harder-Narasimhan filtration, semistable sheaves form the 
building blocks of 
coherent, torsion free sheaves. But to ensure the existence of such (unique) filtrations, we generally need 
more assumptions on $\gamma$ in \autoref{def:slope}. In this article we require 
the existence of Harder-Narasimhan 
filtration under the assumption that $\gamma\in N_1(X)_{\bQ}$ is \emph{movable}.  
Thankfully, such filtrations are known to exist for such classes \cite{GKP15}*{Sect.2}.

\begin{definition}[Movable classes]
We say $\gamma\in \N_1(X)_{\bQ}$ is strongly movable, if there are 
a projective birational morphism $\pi: \wtilde X\to X$ and a set of 
ample divisors $H_1, \ldots , H_{n-1}$ on $\wtilde X$ 
such that $\gamma= \pi_* [ H_1 \cdot \ldots \cdot H_{n-1} ]$. The convex cone in $N_1(X)_{\bQ}$
generated by such classes is denoted by $\Mov_1(X)$. We call its closure $\overline\Mov_1(X)$ the movable cone. 
Nontrivial members of $\overline\Mov_1(X)$ are referred to as movable or mobile classes.
\end{definition}

As discussed in the introduction, for semistability to lead to a suitable Bogomolov inequality we need to work 
with a smaller set of $1$-cycles than those in $\overline\Mov_1(X)$. To do so we use 
the following definition. 

\begin{definition} 
Given an ample class $H \in \N^1(X)_{\bQ}$, we define 
$$
K^+_H(X) :=  \{  D \in \N^1(X)_{\bQ}  \;  \big|  \;  D^2 \cdot H^{n-2} >0 \; \text{and}  \;    D\cdot H^{n-1} >0   \} \;\;\;\; \subset \N^1(X)_{\bQ}. 
$$
Furthermore, we set 

$$
B^+_H(X) : = \{  [D\cdot H^{n-2}]  \in \N_1(X)_{\bQ}   \; \big|  \;   D\in K^+_H(X)    \}  \;\;\;\;  \subset  \N_1(X)_{\bQ}.
$$
\end{definition}

A key property of $K^+_H$ is its ``self-duality" in the sense that 
\begin{equation}
\label{eq:dual}
K^+_H(X) = \{  D\in \N^1(X)_{\bQ}  \; \big |  \;  D\cdot B \cdot H^{n-2} > 0, \text{for all} \; 0 \neq B \in \overline K^+_H(X) \} , 
\end{equation}
cf. \cite[p. 261]{Langer04a} and \cite{MR2665168}*{7.4}. In particular $B^+_H(X) \cup \{ 0\}$ forms a convex cone. 
The next theorem of Langer explains our interest in $B^+_H(X)$.

\begin{theorem}[\protect{\cite[Thm.~3.4]{Langer04a}}]
\label{thm:Langer}
For any torsion free sheaf $\sF$ of rank $r$ on $X$, satisfying the inequality 
$$
 \underbrace{\big( 2r c_2(\sF)  - (r-1) c_1^2(\sF)  \big)}_{\Delta_B(\sF)} \cdot H^{n-2} < 0
$$
there is a saturated subsheaf $ 0 \neq \sF' \subset \sF$ of rank $r'$ such that 
$$
\Big(  \frac{1}{r'} c_1(\sF')  -  \frac{1}{r}  c_1(\sF) \Big)  \in  K^+_H(X)\footnote{Here $c_1(\sF)$ 
is thought of as the dual of its image under the cycle map, represented 
by the reflexivization of $\det(\sF)$ (similarly for $c_1(\sF')$). } .
$$
\end{theorem}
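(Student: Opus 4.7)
The plan is to reduce the statement to the classical Bogomolov inequality by restricting to a generic complete intersection surface, and then exploit the Harder--Narasimhan filtration together with the Hodge index theorem to inductively extract the required saturated subsheaf.

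\textbf{Step 1: Bogomolov in the $H^{n-1}$-semistable case.} I first establish that if $\sE$ is torsion free and semistable with respect to the class $[H^{n-1}]$, then $\Delta_B(\sE) \cdot H^{n-2} \geq 0$. This reduces to the surface case: for very general $D_i \in |mH|$ with $m \gg 0$, the effective Mehta--Ramanathan restriction theorem (due to Flenner, and refined by Langer) ensures that $\sE|_S$ remains semistable on the complete intersection surface $S = D_1 \cap \cdots \cap D_{n-2}$, and the classical Bogomolov--Gieseker inequality on $S$ then gives $m^{n-2}\,\Delta_B(\sE) \cdot H^{n-2} = \Delta_B(\sE|_S) \geq 0$. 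In particular, the hypothesis $\Delta_B(\sF) \cdot H^{n-2} < 0$ forces $\sF$ to be $H^{n-1}$-unstable.

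\textbf{Step 2: HN discriminant identity.} Consider the Harder--Narasimhan filtration $0 = \sF_0 \subsetneq \sF_1 \subsetneq \cdots \subsetneq \sF_\ell = \sF$ with respect to $[H^{n-1}]$, with $H^{n-1}$-semistable graded pieces $\gr_i = \sF_i/\sF_{i-1}$ of rank $r_i$ and strictly decreasing slopes $\mu_i$. Setting $\xi_i := c_1(\gr_i)/r_i - c_1(\sF)/r$, so that $\sum_i r_i \xi_i = 0$, a direct expansion using $c_1(\sF) = \sum c_1(\gr_i)$ and $c_2(\sF) = \sum c_2(\gr_i) + \sum_{i<j} c_1(\gr_i)\,c_1(\gr_j)$ yields
\[
\Delta_B(\sF) \;=\; \sum_{i=1}^\ell \frac{r}{r_i}\,\Delta_B(\gr_i) \;-\; r \sum_{i=1}^\ell r_i\,\xi_i^{\,2}.
\]
Intersecting with $H^{n-2}$ and applying Step~1 to each semistable $\gr_i$, the assumption $\Delta_B(\sF) \cdot H^{n-2} < 0$ yields $\sum_i r_i\,\xi_i^{\,2} \cdot H^{n-2} > 0$.

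\textbf{Step 3: Extracting the subsheaf.} I argue by induction on $r = \rank\sF$. In the base case $r = 2$, the HN filtration has two rank-one pieces, line bundles have vanishing $\Delta_B$, and the identity above collapses to $\Delta_B(\sF) = -4\xi_1^{\,2}$, forcing $\xi_1^{\,2}\cdot H^{n-2} > 0$; combined with $\xi_1 \cdot H^{n-1} = \mu_1 - \mu(\sF) > 0$, we have $\xi_1 \in K^+_H(X)$ and take $\sF' = \sF_1$. For the inductive step, if the maximal destabilizing subsheaf $\sF_1$ already has $\xi_1^{\,2}\cdot H^{n-2} > 0$, take $\sF' = \sF_1$. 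Otherwise, the two-step specialization of the HN identity reads
\[
\Delta_B(\sF)\cdot H^{n-2} = \tfrac{r}{r_1}\Delta_B(\sF_1)\cdot H^{n-2} + \tfrac{r}{r-r_1}\Delta_B(\sF/\sF_1)\cdot H^{n-2} - \tfrac{r^2 r_1}{r-r_1}\,\xi_1^{\,2}\cdot H^{n-2},
\]
and combined with $\Delta_B(\sF_1)\cdot H^{n-2} \geq 0$ (Step~1 applied to the semistable $\sF_1$) and the case assumption $\xi_1^{\,2}\cdot H^{n-2} \leq 0$, forces $\Delta_B(\sF/\sF_1)\cdot H^{n-2} < 0$. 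The inductive hypothesis applied to $\sF/\sF_1$ yields a saturated subsheaf $\sG \subset \sF/\sF_1$ with slope defect $\zeta_\sG \in K^+_H(X)$. Let $\sF' \subset \sF$ be the saturated preimage of $\sG$; a direct calculation yields $\zeta_{\sF'} = \alpha\xi_1 + \beta\zeta_\sG$ with strictly positive rational weights $\alpha, \beta$ determined by the ranks.

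\textbf{Main obstacle.} The crux is verifying $\zeta_{\sF'} \in K^+_H(X)$. The linear inequality $\zeta_{\sF'} \cdot H^{n-1} > 0$ is automatic from the positivity of $\alpha, \beta, \xi_1 \cdot H^{n-1}$, and $\zeta_\sG \cdot H^{n-1}$, but the quadratic inequality $\zeta_{\sF'}^{\,2}\cdot H^{n-2} > 0$ must be extracted from the cross term $2\alpha\beta\,\xi_1 \cdot \zeta_\sG\cdot H^{n-2}$, whose sign is not a priori controlled. Here I would use the self-duality \eqref{eq:dual} of $K^+_H$ together with a refined Hodge-index decomposition $\xi_1 = aH + \eta$ with $\eta\cdot H^{n-1} = 0$ (so that $\eta^{\,2}\cdot H^{n-2} \leq 0$), leveraging the strict slope gap $\mu_1 > \mu(\sF/\sF_1)$ produced by the HN filtration to pin down the sign of the cross term. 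Saturating the resulting $\sF'$ inside $\sF$ leaves $c_1(\sF')$ unchanged, so the condition is preserved for the saturated subsheaf required by the statement.
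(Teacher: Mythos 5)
The paper does not supply its own proof of this theorem: it is quoted verbatim as Langer's Theorem~3.4 and used as a black box. Your attempt therefore amounts to an independent reconstruction, and its broad scaffolding---Flenner/Mehta--Ramanathan restriction, the discriminant identity over the Harder--Narasimhan pieces, induction on rank---is the right framework and resembles the skeleton of Langer's argument. Step~1 is sound, the identity in Step~2 is algebraically correct (I checked it), and the rank-$2$ base case is fine.

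The genuine gap is in the inductive step, and it is not a detail you can defer. You correctly compute $\zeta_{\sF'} = \alpha\,\xi_1 + \beta\,\zeta_{\sG}$ with $\alpha,\beta > 0$, and the linear condition $\zeta_{\sF'}\cdot H^{n-1} > 0$ is indeed automatic. But the quadratic condition $\zeta_{\sF'}^{\,2}\cdot H^{n-2}>0$ is the entire content of the theorem, and the mechanism you sketch does not close it. Self-duality \eqref{eq:dual} of $K^+_H$ gives $\zeta_{\sG}\cdot B\cdot H^{n-2}>0$ for every $0\neq B\in\overline{K^+_H}(X)$; to use it on the cross term you would need $\xi_1\in\overline{K^+_H}(X)$, but your case hypothesis is precisely $\xi_1^{\,2}\cdot H^{n-2}\leq 0$, which excludes that (except in a borderline equality). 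The Hodge-index decomposition $\xi_1 = aH+\eta_1$, $\zeta_\sG = bH+\eta_2$ with $\eta_i\cdot H^{n-1}=0$ makes the problem transparent: the cross term is $abH^n + \eta_1\cdot\eta_2\cdot H^{n-2}$, and the constraint $\xi_1^2\cdot H^{n-2}\leq 0$ forces $|\eta_1^2\cdot H^{n-2}|\geq a^2H^n$, so Cauchy--Schwarz on the negative-definite part allows $|\eta_1\cdot\eta_2\cdot H^{n-2}|$ to dominate $abH^n$. Nothing in the inductive hypothesis prevents the cross term, and hence $\zeta_{\sF'}^{\,2}\cdot H^{n-2}$, from being negative. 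The ``strict slope gap'' controls only the $H^{n-1}$-components, not the $\eta$-components, so it does not pin down the sign. What you would actually need is the quantitative leverage Langer extracts from his effective restriction theorem (his Theorem~3.2), which ties the size of the slope defects to the discriminant; the naive induction on $\sF/\sF_1$ simply discards that information. Until that quadratic estimate is supplied, the proof does not go through.
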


\begin{remark}\label{rmk:Langer}
We note that by the self-duality property \autoref{eq:dual}, the subsheaf $\sF'$ in \autoref{thm:Langer} 
is a properly destabilizing subsheaf with respect to \emph{any} $\gamma\in B^+_H(X)$. That is, if 
$\sF$ is semistable with respect to some $\gamma\in B^+_H(X)$, then it verifies the Bogomolov 
inequality $\Delta_B(\sF) \cdot H^{n-2}\geq 0$. 
\end{remark}

\subsection{Orbifold sheaves and Chern classes} 
We follow the definitions and constructions of \cite[Sects.~2,3]{GT16}
in the generally simpler context of log-smooth pairs. 
We refer to \cite{Cam04}, \cite{MR2860268}, \cite{Taji16}, and \cite{CKT16} 
for more examples and details on pairs and associated 
notions of adapted morphisms. 

A \emph{pair} $(X,D)$ consists of a variety $X$ and $D = \sum d_i \cdot D_i\in \Div(X)_{\bQ}$, 
with $d_i = 1- \frac{b_i}{a_i} \in [0,1] \cap \bQ$, for some $a_i, b_i \in \bN$. 
A pair $(X,D)$ is said to be log-smooth, if $X$ is smooth and $D$ 
has simple normal crossing support. 
We say $(X,D)$ is (quasi-)projective, if $X$ is so.

We now recall a few 
basic notions regarding morphisms, sheaves and Chern classes encoding 
the fractional part of $D$ in $(X,D)$. 

\begin{definition}[Adapted morphisms]
Given a quasi-projective pair $(X,D)$, a finite, Galois and surjective morphism $f: Y \to X$ 
of schemes is called adapted (to $(X,D)$), if the following conditions are satisfied. 

\begin{enumerate}
\item $Y$ is normal and quasi-projective.
\item For every $D_i$, with $d_i\neq 1$, there are $m_i\in \bN$ and a reduced 
divisor $D'_i\subset Y$ such that $f^* D_i = (m\cdot a_i) \cdot D'_i$. 
\item The morphism $f$ is \'etale at the generic point of $\lfloor  D  \rfloor$.
\end{enumerate}
Furthermore, if $m_i=1$, for all $i$, we say $f$ is strictly adapted.
\end{definition}

\begin{example}\label{ex:Kaw}
Constructions of Bloch--Gieseker \cite{BG71} and Kawamata \cite[Prop.4.12]{Laz04-I} provide prime examples of 
strictly adapted morphisms with the following additional property: the ramification 
locus of $f$ is equal to $\supp( \lfloor D \rfloor + A )$, for some general member $A$ 
of a very ample linear system. Moreover, when $(X,D)$ is log-smooth, from their construction 
it follows that so is $(Y, (f^*D)_{\red})$.
\end{example}

\begin{notation}
\label{not:orb}
Given a log-smooth pair $(X,D)$, let $f: Y\to (X,D= \sum d_i \cdot D_i)$
be strictly adapted. Assume that $Y$ is smooth.
With $d_i= 1- (a_i/b_i)$, let $D^{ij}_Y$ be the collection of 
prime divisors in $\supp(f^* D_i)$ and define 
$$
\widehat D_Y^{ij} : = b_i \cdot D_Y^{ij} .
$$
Let $G:= \Gal(Y/X)$.
\end{notation}

\begin{definition}[Orbifold cotangent sheaf]
In the setting of \autoref{not:orb} we define the orbifold cotangent sheaf $\Omega^1_{(Y, f, D)}$ 
of $(X,D)$ 
with respect to $f$
by the kernel of the morphism 
$$
f^* \Omega^1_X \big(  \log \lceil D \rceil  \big) \longrightarrow  \bigoplus_{i, j(i)} \sO_{\widehat D_Y^{ij}} ,
$$
which is naturally defined using the residue map. 
\end{definition}
 
 We note that $\Omega^1_{(Y, f,D)}$ naturally has a structure of a $G$-sheaf \cite{CKT16} (see \cite[Def.~4.2.5]{MR2665168}
 for the definition). 
 Such objects are studied in a much more general setting (called orbifold sheaves) 
 in \cite[Subsect.~2.6]{GT16}. 
 
 \subsection{Orbifold Chern classes} 
 Let $f: Y\to (X,D)$ be a strictly adapted morphism for a  log-smooth pair $(X,D)$. Assume that 
 $Y$ is smooth and set $G:= \Gal(Y/X)$. Given a coherent $G$-sheaf $\sE$ on $Y$, we have 
 $c_i(\sE) \in A^i(Y)^G$. 
 Here $c_i(\cdot)$ 
 denotes the $i$th Chern class and $A^i(Y)^G$ the group of $G$-invariant, $i$-cocycles in $Y$. 
 We define the $i$th orbifold Chern class of $\sE$ by 
 \begin{equation}\label{eq:OC}
 \widehat c_i(\sE) : =  \frac{1}{|G|} \cdot \psi_i \big(  c_i(\sE)  \big) \;\;\;  \in A_{n-i}(X) \otimes \bQ, 
   \end{equation}
where $\psi_i$ is the natural map 
$\psi_i: A^i(Y)^G \otimes \bQ \to  A_{n-i}(X)\otimes\bQ$ defined by the composition of cap product 
with $[Y]$ and pushforward. 

With the above definition, when $X$ is projective, $\widehat c_i(\sE)$ defines 
a multilinear form on $N^1(X)_{\bQ}^{n-i}$. Furthermore, with $f$ being flat, from \cite[Thm.~3.1]{MR717614} it 
follows that $\psi$ is in fact a group isomorphism. Thus, similar to \cite{MR717614} 
(or \cite[Append.]{GT16}) we can use 
this isomorphism to equip $A_* (X)\otimes \bQ$ with a ring structure compatible 
with that of $A^*(Y)^G\otimes \bQ$. In this way, products of orbifold Chern classes 
can also be consistently defined in $A_*(X)\otimes \bQ$. 

One can check that for $G$-sheaves on $Y$, defined by pullback of sheaves on $X$, 
the above notion of orbifold Chern classes is consistent with the projection formula, 
when applicable.


\begin{remark}
\label{rk:Bog}
Let $f:Y \to (X,D)$ be strictly adapted to the log-smooth projective pair as in \autoref{ex:Kaw} and $H$ 
an ample $\bQ$-divisor. 
Let $\gamma\in B^+_H(X)$ and $\sF$ a torsion free, $G$-sheaf of rank $r$ on $Y$ that is $(f^*\gamma)$-semistable. 
Then, by \autoref{thm:Langer} and \autoref{rmk:Langer} we know that $\Delta_B(\sF) \cdot f^* H^{n-2}\geq 0$. 
Using \autoref{eq:OC} we can then deduce that 
\begin{equation*}
\label{eq:BOG}
\underbrace{\big( 2r \widehat c_2(\sF)  - (r-1) \widehat c_1^2  (\sF) \big)}_{\widehat \Delta_B(\sF)}  \cdot H^{n-2}  \geq  0 .
\end{equation*}
\end{remark}

\begin{notation}
Given a strictly adapted morphism $f: Y \to (X,D)$, with log-smooth $(Y, (f^*D)_{\red})$, we define 
\begin{equation}\label{not:CHERN}
\widehat c_i(X,D)  :  =   \widehat c_i( \Omega^1_{(Y,f,D)} ) \in  A_{n-i}(X)\otimes \bQ.
\end{equation}
We note that according to \cite[Prop.~3.5 and Ex. 3.3]{GT16} the cycle defined 
by $\widehat c_i(\Omega^1_{(Y,f,D)})$ 
is independent of the choice of $f$. In this light, the choice of notation in \autoref{not:CHERN} 
is unambiguous within the set of such morphisms. 
\end{notation}

\section{Constructing movable cycles in $B^+_H$ via Zariski decomposition}
\label{sect:Section3-Moving}
Our main goal in this section is to use Zariski decomposition on certain complete-intersection 
surfaces to construct global moving cycles in $B^+_H$, which is the content 
of \autoref{prop:star}.

\begin{proposition}
\label{prop:Moish}
Let $X$ be a smooth projective variety of dimension $n\geq 3$ and $H$ a very ample divisor. 
For a sufficiently large $m$, there are 
a (Zariski) dense subset $V_{\nl}\subseteq |mH|$ and a 
smooth complete 
intersection surface 
$$
S_{\nl} =   H_1\cap \ldots \cap H_{n-3}\cap A ,
$$
where each $H_i$ is a general members of $|H|$ and $A\in  V_{\nl}$, 
satisfying the following properties. 
\begin{enumerate}
\item \label{item:Pic} The restriction map $\Pic(X)\to \Pic(S_{\nl})$ is an isomorphism.
\item \label{item:Num} The isomorphism in \autoref{item:Pic} extends to an isomorphism 
$\N^1(X)\to \N^1(S_{\nl})$.
\end{enumerate}
\end{proposition}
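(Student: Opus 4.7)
The plan is to cut $X$ down to a smooth projective threefold by general hyperplane sections and then to apply the Noether--Lefschetz theorem on this threefold. The first step produces the classical Lefschetz hyperplane content in dimensions four and higher, the second step is the Hodge-theoretic codimension-three cut, and the third step upgrades Picard groups to numerical equivalence classes. The hard part will be the Noether--Lefschetz input.

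\textbf{Step 1 (Reduction to a threefold).} Assuming $n \geq 4$, I would use Bertini to choose general $H_1, \ldots , H_{n-3} \in |H|$ so that $Y := H_1 \cap \cdots \cap H_{n-3}$ is a smooth projective threefold. Iterating the Grothendieck--Lefschetz theorem on Picard groups (SGA~2, XII)---valid since at each step the ambient variety has dimension at least four and one cuts by an ample divisor---yields that the restriction $\Pic(X) \to \Pic(Y)$ is an isomorphism. The case $n = 3$ is trivial; one sets $Y := X$.

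\textbf{Step 2 (Noether--Lefschetz on $Y$).} I would then apply the Noether--Lefschetz theorem for surfaces in an arbitrary smooth projective threefold, in the form due to Moishezon, which extends the classical case of $\bP^3$. For $m$ sufficiently large this provides a countable collection $\{ Z_i \}_{i \in \bN}$ of proper Zariski closed subsets of $|mH|$ such that for every $A$ in the complement $V_{\nl} := |mH| \setminus \bigcup_i Z_i$, the surface $S_A := A \cap Y$ is smooth and the restriction $\Pic(Y) \to \Pic(S_A)$ is an isomorphism. Since $\bC$ is uncountable, the complement of a countable union of proper Zariski closed subvarieties is Zariski dense, so $V_{\nl}$ has the required density, and composition with Step~1 produces the required isomorphism of Picard groups. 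This is the main obstacle: one must take $m$ large enough that the infinitesimal variation of Hodge structure on $|mH_Y|$ is sufficiently positive to force the non-Noether--Lefschetz locus to decompose as a countable union of proper subvarieties.

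\textbf{Step 3 (From Picard to numerical classes).} I would note that the restriction sends $\Pic^0(X)$ into $\Pic^0(S_{\nl})$ by functoriality of the Picard variety, while the Noether--Lefschetz isomorphism identifies these two connected components. Descending to the quotient yields $\NS(X) \cong \NS(S_{\nl})$, and modding out by torsion gives the desired isomorphism $\N^1(X) \cong \N^1(S_{\nl})$.
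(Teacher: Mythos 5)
Your proposal follows the same three-stage architecture as the paper: reduce to a threefold $Y$ via Grothendieck--Lefschetz, invoke Moishezon's Noether--Lefschetz theorem on $|mH_Y|$, and then promote the Picard isomorphism to an isomorphism on $\N^1$. The differences are in the second and third stages.

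In Step 2 you correctly identify the real content---one must verify that $m$ can be taken large enough for Moishezon's theorem to apply---but you leave this as an assertion about the ``infinitesimal variation of Hodge structure being sufficiently positive.'' The paper instead gives a concrete verification: it uses the adjunction sequence
$0 \to \sO_Y(K_Y) \to \sO_Y(K_Y+S) \to \sO_S(K_S) \to 0$
together with Kodaira vanishing $H^1(K_Y+S)=0$ to show that the inclusion $i\colon H^0(K_Y)\hookrightarrow H^0(K_Y+S)$ becomes strict for $m\gg 0$, and then a short diagram chase identifies this strictness with the jump $h^{2,0}(Y)<h^{2,0}(S)$, which is exactly the hypothesis needed to run Moishezon's theorem. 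So the paper closes the gap you flag but do not fill. You should supply that verification (or cite a reference where the nonvanishing of the Noether--Lefschetz locus for $m\gg 0$ is established) rather than treating it as known.

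In Step 3 you take a genuinely different route from the paper. You descend from $\Pic$ to $\NS$ by observing that restriction sends $\Pic^0(X)$ into $\Pic^0(S_{\nl})$ and that the group isomorphism $\Pic(X)\cong\Pic(S_{\nl})$ must identify the two $\Pic^0$'s (implicitly, because $\Pic^0$ is the maximal divisible subgroup of $\Pic$, since $\NS$ is finitely generated), so $\NS(X)\cong\NS(S_{\nl})$ and hence $\N^1(X)\cong\N^1(S_{\nl})$ after killing torsion. The paper instead compares the exponential sequences on $Y$ and $S_{\nl}$ and uses the Lefschetz hyperplane theorem to get injectivity of $H^2(Y;\bZ)\to H^2(S_{\nl};\bZ)$, then observes that the Picard isomorphism descends to numerical equivalence classes. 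Both arguments are valid; yours is slightly more self-contained on the algebraic side, while the paper's is more directly topological and avoids appealing to the structure of $\Pic^0$ as a divisible group. Either way, you should make explicit the justification for why the isomorphism identifies $\Pic^0(X)$ with all of $\Pic^0(S_{\nl})$ (not merely a subgroup), since this is where the argument could otherwise leave a reader uncertain.
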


\begin{proof}
By a repeated application of the Grothendieck-Lefschetz theorem \cite[Exp.~XII, Cor.~3.6]{SGA2} 
(see also \cite[Ex.~3.1.25]{Laz04-I} and further references therein) there are 
general members $H_i$ of $|H|$ such that $Y: = H_1\cap \ldots \cap H_{n-3}$ 
is a smooth projective threefold for which the natural map $\Pic(X)\to \Pic(Y)$ is
an isomorphism. Furthermore, for any $m\in \bN$, after shrinking $|mH|$, if necessary, 
the complete-intersection surface $S = Y\cap A$ is smooth, for every $A\in |mH|$.  
 
Now, consider the short exact sequence 
$$
\xymatrix{
0  \ar[r]  &  \sO_Y(K_Y)  \ar[r]  &  \sO_Y(K_Y +S) \ar[r]  &  \sO_{S}(K_{S})  \ar[r]  &  0  ,
}
$$
which is naturally defined by using adjunction. By the Kodaira vanishing $H^1(K_Y+S)=0$, 
the induced exact cohomology sequence partially reads 
$$
\xymatrix{
0  \ar[r]  &  H^0(K_Y)  \ar[r]^(0.4){i}  &  H^0(K_Y +S) \ar[r]  &  H^0(K_{S})  \ar[r]^{\alpha}  &  H^1(K_Y) \ar[r] &  0 .
}
$$
\begin{claim}\label{claim:Moish}
We have $h^{2,0}(Y) < h^{2,0}(S)$, if and only if  $i$ is a strict inclusion.
\end{claim}
\noindent 
\emph{Proof of \autoref{claim:Moish}.}
Noting that 
\[
h^{2,0}(Y) = h^{0,2}(Y) = h^2(\sO_Y) = h^1(K_Y)  \;  , \;
h^{2,0}(S) = h^{0,2}(S) = h^2(\sO_{S}) = h^0(K_{S})
\]
and the surjectivity of $\alpha$, we find that $h^{2,0}(Y)< h^{2,0}(S)$, if and only 
if $\ker(\alpha)\neq 0$. The rest now follows from a straightforward diagram chasing. \qed

Now, let $m$ be sufficiently large so that $i:H^0(K_Y)\to H^0(K_Y+S)$ is a strict injection. 
Thanks to a theorem of Moishezon~\cite[Thm.~7.5]{Moi67}, after removing a countable number of 
closed subschemes from $|mH|$, we find a subset $V_{\nl}\subseteq |mH|$ such that, 
for every $A\in V_{\nl}$ and $S_{\nl}: = Y \cap A$,  the natural map $\Pic(Y)\to \Pic (S_{\nl})$
is an isomorphism. 

For Item~\ref{item:Num}, we will keep the notations for the proof of Item~\ref{item:Pic}.
Again, since $\N^1(X) \cong N^1(Y)$ (see for example \cite[Ex.~3.1.29]{Laz04-I}), it suffices to prove 
$\N^1(Y)\cong \N^1(S_{\nl})$. As $S_{\nl}$ is reduced, we have a commutative diagram 
of long exact cohomology sequences arising from the two exponential sequences 
on $Y$ and $S_{\nl}$. In particular we have 
$$
\xymatrix{
H^1(Y, \sO^*_Y)  \ar[rr] \ar[d]  &&   H^2(Y; \bZ) \ar[d]  \\
H^1(S_{\nl} , \sO^*_{S_{\nl}})  \ar[rr]  &&    H^2(S_{\nl} ; \bZ)  . 
}
$$

Now, with the vertical arrow on the left being an isomorphism by Item~\ref{item:Pic} and 
the one on the right being an injection by the Lefschetz hyperplane theorem 
(\cite[Thm.~3.1.17]{Laz04-I} or \cite[2.3.2]{Voisin-Hodge2}), 
the isomorphism $\Pic(Y)\to \Pic(S_{\nl})$ descends 
to an isomorphism $\big(\Pic(Y)/\equiv \big) \to \big(\Pic(S_{\nl})/ \equiv\big)$, as required. 
\end{proof}

Before stating the application of \autoref{prop:Moish} that we need, we briefly review 
Nakayama's $\sigma$-decomposition.
\subsection{$\sigma$-decomposition}
According to \cite[Chapt.~III]{Nakayama04}, given a smooth projective variety and a 
big divisor $B\in \Div(X)_{\bR}$, for every prime divisor $\Gamma\subset X$, we define 
\begin{equation}\label{eq:Sigma}
\sigma_{\Gamma} (B)  :  =  \inf \{  \mathrm{mult}_{\Gamma} (B') \; \big|  \;  B' \equiv_{\bR}  B , \; B'\geq 0   \} , 
\end{equation}
and set $N_{\sigma}(B): = \sum\limits_{\text{prime} \; \Gamma} \sigma_{\Gamma}(B)\cdot \Gamma$. 
We further define $P_{\sigma}(B) : = B - N_{\sigma} (B)$. 
Now, let $D$ be a pseudo-effective divisor. For any ample divisor $A$, according 
to \cite[Lem.~III. 1.5]{Nakayama04} and \cite[Lem.~III. 1.7]{Nakayama04}, 
$\lim_{\epsilon \to 0^+} \sigma_{\Gamma}(D+ \epsilon A)$ exists and is independent of the choice of $A$.
We now set 
$$
\sigma_{\Gamma}(D) : = \lim_{\epsilon \to 0^+} \sigma_{\Gamma} (D+  \epsilon A) \footnote{When $D$ is big, this is consistent 
with \autoref{eq:Sigma}, cf.~\cite[Lem.~III. 1.7]{Nakayama04}.}.
$$
and define the \emph{negative part} of $D$ by 
\begin{equation}\label{eq:N}
N_{\sigma} (D): = \sum_{\text{prime}\; \Gamma} \sigma_{\Gamma} (D) \cdot \Gamma, 
\end{equation}
which  by \cite[Cor.~III.1.11]{Nakayama04} is a finite sum. 
We set $P_{\sigma}(D):= D -  N_{\sigma}(D)$ and call the decomposition $D= N_{\sigma}(D) + P_{\sigma}(D)$
the \emph{$\sigma$-decomposition} of $D$. We sometimes refer to $P_{\sigma}$ as the \emph{positive part} of this decomposition.


Next, we establish the fact that negative parts behave well under taking limits.
\begin{proposition}
\begin{equation}\label{eq:N2}
\lim_{\epsilon \to 0^+} N_{\sigma}(D+\epsilon A) = N_{\sigma} (D) . 
\end{equation}
\end{proposition}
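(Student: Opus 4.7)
The plan is to reduce the claimed equality of $\bR$-divisors to coefficient-wise convergence on a fixed finite set of prime divisors, using Nakayama's monotonicity and finite-support results that have already been cited in the paragraph preceding the statement.

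First, I would invoke the monotonicity of $\epsilon \mapsto \sigma_\Gamma(D+\epsilon A)$ from \cite[Lem.~III.1.7]{Nakayama04}: for any prime divisor $\Gamma$ and any $\epsilon_1 \geq \epsilon_2 > 0$, one has $\sigma_\Gamma(D + \epsilon_1 A) \leq \sigma_\Gamma(D + \epsilon_2 A)$. Combined with the definition of $\sigma_\Gamma(D)$ for pseudo-effective $D$ recalled just above the proposition, this non-increasing function of $\epsilon$ converges to $\sigma_\Gamma(D)$ as $\epsilon \to 0^+$, and in particular satisfies $\sigma_\Gamma(D + \epsilon A) \leq \sigma_\Gamma(D)$ for every $\epsilon > 0$.

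Next, I would appeal to \cite[Cor.~III.1.11]{Nakayama04} to conclude that the set $S := \{\Gamma : \sigma_\Gamma(D) > 0\}$ of prime divisors appearing in $N_\sigma(D)$ is finite. The previous step then forces $\supp\bigl(N_\sigma(D + \epsilon A)\bigr) \subseteq S$ for every $\epsilon > 0$, since $\sigma_\Gamma(D+\epsilon A) > 0$ implies $\sigma_\Gamma(D) \geq \sigma_\Gamma(D+\epsilon A) > 0$, hence $\Gamma \in S$. Consequently both sides of the proposed identity are finite sums supported on the single finite set $S$, and we may write $N_\sigma(D + \epsilon A) = \sum_{\Gamma \in S} \sigma_\Gamma(D + \epsilon A) \cdot \Gamma$ for every $\epsilon > 0$.

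With the supports uniformly controlled by $S$, the limit reduces to the coefficient-wise statement $\sigma_\Gamma(D+\epsilon A) \to \sigma_\Gamma(D)$ for $\Gamma$ ranging over the finite set $S$, which is exactly the defining property of $\sigma_\Gamma(D)$. No substantive obstacle is anticipated: the entire argument is a direct unpacking of Nakayama's definitions, the only nontrivial inputs being the monotonicity \cite[Lem.~III.1.7]{Nakayama04} and the finite-support result \cite[Cor.~III.1.11]{Nakayama04}, which together upgrade pointwise convergence of coefficients to convergence of the underlying $\bR$-divisors.
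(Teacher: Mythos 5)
Your proposal is correct and reaches the same conclusion by the same overall strategy as the paper: establish that $\sigma_\Gamma(D+\epsilon A) \leq \sigma_\Gamma(D)$ coefficient-wise, deduce that all the divisors $N_\sigma(D+\epsilon A)$ are supported on the fixed finite set $S = \supp N_\sigma(D)$ given by \cite[Cor.~III.1.11]{Nakayama04}, and then pass to the limit coefficient by coefficient. The one place you diverge is in how you obtain the key inequality: you cite the monotonicity in $\epsilon$ of $\sigma_\Gamma(D+\epsilon A)$ from \cite[Lem.~III.1.7]{Nakayama04} and combine it directly with the definition of $\sigma_\Gamma(D)$ as the limit, whereas the paper takes a small detour, first proving subadditivity $\sigma_\Gamma(D_1+D_2) \leq \sigma_\Gamma(D_1)+\sigma_\Gamma(D_2)$ for pseudo-effective divisors (extending the statement on p.~79 of Nakayama from the big case by a limiting argument) and then specializing to $D_1 = D$, $D_2 = \epsilon A$ to get $N_\sigma(D+\epsilon A) \leq N_\sigma(D)$. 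Your route is somewhat more economical since it avoids proving the auxiliary subadditivity lemma; the paper's route has the mild advantage of producing the divisor-level inequality $N_\sigma(D+\epsilon A) \leq N_\sigma(D)$ as an explicit intermediate statement, which is then used directly in the sandwich argument. Both are fully rigorous and rest on the same two inputs from Nakayama, so the difference is stylistic rather than structural.
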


\begin{proof}
First, let us observe that for any two pseudo-effective divisors $D_1$ and $D_2$ we have 
$$
\sigma_{\Gamma} (D_1 + D_2)  \leq  \sigma_{\Gamma}(D_1) + \sigma_{\Gamma} (D_2).
$$
Indeed, by definition, we have 
\begin{equation}\label{No1}
\sigma_{\Gamma} (D_1 + D_2) = \lim_{\epsilon \to 0^+} \sigma_{\Gamma} (D_1 + \frac{\epsilon}{2} A  + D_2 + \frac{\epsilon}{2} A ) .
\end{equation}
By \cite[Chapt.~III, p.~79]{Nakayama04} we have 
$$
\sigma_{\Gamma} (D_1 + \frac{\epsilon}{2} A  + D_2 + \frac{\epsilon}{2} A ) \leq \sigma_{\Gamma}(D_1 + \frac{\epsilon}{2}A) + 
\sigma_{\Gamma} (D_2 + \frac{\epsilon}{2}A).
$$

Therefore, after taking the limit and using (\ref{No1}), we find  
$$
\sigma_{\Gamma}(D_1 + D_2)  \leq \lim_{\epsilon \to 0^+} (\sigma_{\Gamma} (D_1 + \frac{\epsilon}{2}A) + \sigma_{\Gamma} (D_2 + \frac{\epsilon}{2}A)) = \sigma_{\Gamma} (D_1) + \sigma_{\Gamma} (D_2) . 
$$

It now follows that $N_{\sigma}(D_1 + D_2) \leq N_{\sigma}(D_1) + N_{\sigma}(D_2)$. 
In particular for every real $\epsilon>0$, we have:
$N_{\sigma} (D+ \epsilon A) \leq N_{\sigma}(D)$.

Next, define the set $S:= \{   \Gamma \; \text{prime} \; \big |  \; \sigma_{\Gamma}(D) \neq 0  \}$, which is finite by \cite[Cor.~III.1.11]{Nakayama04}.  By definition we now have 
\begin{align*}
N_{\sigma} (D)  & = \sum_{\Gamma\in S} \sigma_{\Gamma}(D)  \cdot \Gamma  \\
                    & =  \sum_{\Gamma \in S}  \lim_{\epsilon \to 0^+} \sigma_{\Gamma} (D+ \epsilon A)  \cdot \Gamma  \\
                    & = \lim_{\epsilon \to 0^+} ( \sum_{\Gamma\in S} \sigma_{\Gamma}(D+ \epsilon A)  \cdot \Gamma  \ ) . 
                    \end{align*}

On the other hand, for every $\epsilon>0$, we have 
$$
\sum_{\Gamma\in S} \sigma_{\Gamma} (D+\epsilon A) \cdot \Gamma \leq  \underbrace{\sum_{\Gamma} \sigma_{\Gamma} (D+\epsilon A) \cdot \Gamma}_{N_{\sigma}(D+\epsilon A)}  \leq  N_{\sigma} (D).
$$
Now, taking the limit $\epsilon \to 0^+$ finishes the proof.
\end{proof}

\begin{remark}
\label{rk:ZD}
For a pseudo-effective integral divisor $D$ on a smooth projective surface, the $\sigma$-decomposition 
of $D$ coincides with the usual Zariski decomposition, cf.~\cite[Rem.~1.17.(1)]{Nakayama04}. In particular, in this case 
$P_{\sigma}$ and $N_{\sigma}$ are $\bQ$-divisors. 
\end{remark}

\begin{corollary}
Let $S_{\nl}$ be the smooth projective surface in the setting of Proposition~\ref{prop:Moish}. 
\begin{enumerate}
\item \label{item:1} Let $\{ D_{\nl, m} \} \in \N^1(S_{\nl})_{\bQ}$ be a sequence 
converging to $D_{\nl}\in \N^1(S_{\nl})_{\bQ}$. Then, $\lim_{m\to \infty} D_m \equiv D$, 
where $D_m$ and $D$ are extensions of $D_{\nl,m}$ and $D_{\nl}$ 
under the isomorphism in Item~\ref{item:Num}.
\item \label{item:2} Assuming that $D_{\nl}\in \N^1(S_{\nl})_{\bQ}$ is pseudo-effective and $A\subset X$
is ample, let 
$N_{\frac{1}{m}}$ be the extension of 
$N_{\sigma}(D_{\nl}+ \frac{1}{m} A|_{S_{\nl}})$.
Then, we have $\lim_{m\to \infty}N_{\frac{1}{m}} = N$, where 
$N$ is the extension of $N_{\sigma}(D_{\nl})$.
\end{enumerate} 
\end{corollary}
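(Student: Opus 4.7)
For Item~\ref{item:1}, the plan is to invoke only linear-algebraic continuity. The Néron--Severi groups $\N^1(X)_{\bQ}$ and $\N^1(S_{\nl})_{\bQ}$ are finite-dimensional $\bQ$-vector spaces, and the restriction map in Item~\ref{item:Num} of \autoref{prop:Moish} is a $\bQ$-linear isomorphism. Extending scalars, one obtains an isomorphism of finite-dimensional real vector spaces $\N^1(X)_{\bR} \to \N^1(S_{\nl})_{\bR}$, whose inverse is automatically continuous (any linear map between finite-dimensional normed spaces is continuous). Since the map $D_{\nl,m}\mapsto D_m$ is precisely the application of this inverse, convergence $D_{\nl,m}\to D_{\nl}$ in $\N^1(S_{\nl})_{\bR}$ transfers to convergence $D_m\to D$ in $\N^1(X)_{\bR}$, as required.

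For Item~\ref{item:2}, the plan is to reduce to the previous proposition applied on the surface $S_{\nl}$. Since $A$ is ample on $X$ and $S_{\nl}$ is a complete-intersection subvariety, the restriction $A|_{S_{\nl}}$ is ample on $S_{\nl}$. Applying the proposition preceding the corollary to the pseudo-effective class $D_{\nl}\in \N^1(S_{\nl})_{\bR}$ with ample divisor $A|_{S_{\nl}}$, we obtain
\[
\lim_{m\to \infty} N_{\sigma}\!\left( D_{\nl} + \tfrac{1}{m} A|_{S_{\nl}} \right) \;=\; N_{\sigma}(D_{\nl})
\]
as an equality of limits of $\bQ$-divisors on $S_{\nl}$ (using \autoref{rk:ZD} to ensure we are working in $\N^1(S_{\nl})_{\bQ}$). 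In particular, the corresponding numerical classes on $S_{\nl}$ converge.

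Now, applying Item~\ref{item:1} to the convergent sequence $\{ N_{\sigma}(D_{\nl}+\tfrac{1}{m}A|_{S_{\nl}}) \}$ with limit $N_{\sigma}(D_{\nl})$ in $\N^1(S_{\nl})_{\bR}$, we deduce that the corresponding extensions in $\N^1(X)_{\bR}$ also converge: namely $N_{1/m} \to N$. This is precisely the claim. The main conceptual point---and the only non-formal input---is the availability of the limit identity $\lim_{\epsilon\to 0^+} N_{\sigma}(D+\epsilon A) = N_{\sigma}(D)$ established in the previous proposition; once this is in hand on $S_{\nl}$, the rest is purely the continuity statement of Item~\ref{item:1}, so no substantial obstacle remains.
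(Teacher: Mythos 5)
Your proof is correct and takes essentially the same route as the paper: the paper's own justification is the one-liner that Item~(1) is immediate from the $\N^1$-isomorphism of Proposition~\ref{prop:Moish}, and Item~(2) follows by combining Item~(1) with the limit identity \autoref{eq:N2} applied on the surface $S_{\nl}$. Your write-up merely spells out the two ingredients the paper leaves implicit — continuity of the inverse of a linear isomorphism of finite-dimensional real vector spaces, and the fact that $A|_{S_{\nl}}$ is ample so \autoref{eq:N2} applies on $S_{\nl}$ — so there is no substantive difference.
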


\begin{proof}
Item~\autoref{item:1} immediately follows from Item~\autoref{item:Num}. Item~\autoref{item:2} follows from \autoref{item:1}
and \autoref{eq:N2}.
\end{proof}

\subsection{From positive parts to movable cycles in $B^+_H$}

\begin{notation}
Let $X$ be a quasi-projective variety of dimension $n$ with a very ample divisor $H$.  
For any subset $W\subseteq \prod^{n-2}_{j=1} |H|$, we use the notation $S\in W$ to say that 
$S = T_1\cap \ldots \cap T_{n-2}$ is a complete-intersection surface defined by some 
element $(T_1, \ldots, T_{n-2})\in W$.
\end{notation}

\begin{lemma}
\label{lem:move}
Let $X$ be a smooth projective variety of dimension $n\geq 3$ and $H$ a  very ample 
divisor. Let $D\in \N^1(X)_{\bQ}$ be a divisor class such that,  
for some $S\in \prod_{j=1}^{n-2} |H|$, the restriction 
$D|_S$ is nef. Then, 
\begin{enumerate}
\item \label{item:nef} 
after removing a countable number of closed subsets of $\prod_{j=1}^{n-2} |H|$, 
there is a (Zariski dense) subset $W^\circ_D\subseteq \prod_{j=1}^{n-2} |H|$ 
such that for every $S_{\eta}\in W^0_D$, the restriction $D|_{S_{\eta}}$ is nef, and 
\item \label{item:move} $[D\cdot H^{n-2}]\in \overline\Mov_1(X)_{\bQ}$.
\end{enumerate}
\end{lemma}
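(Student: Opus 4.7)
The plan is to prove item~\ref{item:nef} first by a specialization argument on the relative Hilbert scheme of the universal family of complete-intersection surfaces, and then to deduce item~\ref{item:move} from item~\ref{item:nef} by restricting pseudo-effective classes to a very general such surface.

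For item~\ref{item:nef}, let $\pi\colon \mathcal{S}\to\prod_{j=1}^{n-2}|H|$ denote the universal family of complete-intersection surfaces, and let $\mathcal{H}=\Hilb(\mathcal{S}/\prod|H|)$ be the relative Hilbert scheme, which is proper over the base and decomposes into countably many irreducible components $\{\mathcal{H}_i\}_{i\in I}$. Composing the universal curve $\mathcal{C}_i\to\mathcal{H}_i$ with the natural projection $\mathcal{S}\to X$ produces a flat family of $1$-cycles in $X$, so its fibrewise numerical class $e_i\in N_1(X)_{\bQ}$ is constant on each $\mathcal{H}_i$. The central step---and the main obstacle---is to establish that for every index in
\[
I_{-}\;:=\;\{\,i\in I \mid D\cdot e_i<0\,\},
\]
the image $Z_i\subset\prod|H|$ of the proper morphism $\mathcal{H}_i\to\prod|H|$ is a proper closed subset. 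Indeed, if to the contrary $Z_i=\prod|H|$, then the surjectivity of $\mathcal{H}_i\to\prod|H|$ yields an effective $1$-cycle $C_0\subset S$ whose class in $N_1(X)_{\bQ}$ equals $e_i$, and hence $D|_S\cdot C_0 = D\cdot e_i<0$, contradicting the nefness of $D|_S$. Granting this, one sets
\[
W^\circ_D\;:=\;\prod_{j=1}^{n-2}|H|\;\setminus\;\bigcup_{i\in I_{-}}Z_i,
\]
which is the complement of a countable union of proper Zariski closed subsets and is thus Zariski dense in $\prod_{j=1}^{n-2}|H|$. For every $\eta\in W^\circ_D$ and every irreducible curve $C\subset S_\eta$, the cycle $C$ is parametrized by some component $\mathcal{H}_i$ with $i\notin I_{-}$, so $D|_{S_\eta}\cdot C=D\cdot e_i\geq 0$, which proves the nefness of $D|_{S_\eta}$.

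For item~\ref{item:move}, by the duality between pseudo-effective divisors and movable $1$-cycles due to Boucksom--Demailly--P\u{a}un--Peternell, it suffices to verify that $D\cdot H^{n-2}\cdot E\geq 0$ for every $E\in\overline{\NE^1}(X)_{\bQ}$. Using item~\ref{item:nef}, I would pick $\eta\in W^\circ_D$ sufficiently general so that, in addition, the numerical restriction $E|_{S_\eta}\in\N^1(S_\eta)_{\bQ}$ is pseudo-effective (a condition that excludes at most a further countable union of proper closed subsets of $\prod_{j=1}^{n-2}|H|$). The nefness of $D|_{S_\eta}$ then forces
\[
D\cdot E\cdot H^{n-2}\;=\;D|_{S_\eta}\cdot E|_{S_\eta}\;\geq\;0,
\]
which concludes the argument.
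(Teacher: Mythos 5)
Your argument is correct, and for Item~\ref{item:nef} it takes a genuinely different route from the paper. The paper constructs the universal family $\chi^{n-2}_X \to \prod_{j=1}^{n-2}|H|$ of complete-intersection surfaces and then invokes openness of amplitude for projective morphisms to Noetherian schemes (cite Koll\'ar--Mori, Prop.~1.41): since $D|_{F_0}$ is nef, and since nefness of $\mu^*D$ on a fiber is equivalent to amplitude of $\mu^*D + \tfrac{1}{m}\mu^*H$ on that fiber for all $m \in \bN$, the nef locus in the base is a countable intersection of open neighborhoods of the point corresponding to $S$, hence contains a very general subset. You instead argue via the relative Hilbert scheme: the countably many components $\{\mathcal{H}_i\}$ parametrize $1$-dimensional subschemes of the fibers, the class $e_i$ is locally constant by flatness and properness over the base, and if a component with $D\cdot e_i < 0$ dominated the base it would (by properness of $\mathcal{H}_i \to \prod|H|$, so that dominance implies surjectivity) specialize to an effective $1$-cycle on $S$ of negative degree against the nef class $D|_S$, a contradiction; removing the countably many proper closed images $Z_i$ then kills all potential negative curves at once. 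Both arguments deliver the same very general locus; the paper's appeal to openness of amplitude is shorter, while your deformation-of-curves argument is more elementary in spirit (and makes visible \emph{why} nefness propagates: negative curves cannot fill the base). For Item~\ref{item:move} your proof is essentially identical to the paper's: fix a pseudo-effective class, restrict to a sufficiently general $S_\eta \in W^\circ_D$ so that the restriction stays pseudo-effective, use nefness of $D|_{S_\eta}$ to get $D\cdot E\cdot H^{n-2}\geq 0$, and conclude by the duality $\big(\overline{\NE^1}(X)_{\bQ}\big)^* = \overline\Mov_1(X)$ of Boucksom--Demailly--P\u{a}un--Peternell.
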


\begin{proof}
Let $i_{|H|}: X \hooklongrightarrow \mathbb P^l$ be the embedding defined by $|H|$, so that 
$i^*\sO_{\mathbb P^l}(1) \cong \sO_X(H)$. We set 
$$
\chi^{n-2}  \subset \bP^l  \times  \underbrace{ \prod_{j=1}^{n-2} \bP  \big(  H^0(\bP^l, \sO_{\bP^l}(1))  \big) }_{:= \Gamma_{n-2} }
$$
to be the (universal) family of complete-intersection surfaces cut out by hyperplanes 
in $| \sO_{\bP^l} (1)|$. More precisely, with $\{ a_{ij} \}_{0 \leq j \leq l}$ being the 
homogenous coordinates for the $i$th factor of $\Gamma_{n-2}$ and 
$\{ f_j \}_{0\leq j\leq l}$ a basis for $H^0(\sO_{\bP^l}(1))$, the variety $\chi^{n-2}$ 
is defined by the vanishing locus of 
$$
\big\{  g_i: =  \sum_{0\leq j\leq l}  a_{ij} f_j \big\}_{1\leq i \leq n-2}  . 
$$
Next, set $\chi^{n-2}_X$ to be the pullback of $\chi^{n-2}$ via the natural injection 
$$
\xymatrix{
X \times  \prod\limits_{j=1}^{n-2} |H| \;  \ar@{^{(}->}[rr]^{i_{|H|}\otimes \text{isom}}  &&  \bP^l \times  \Gamma_{n-2},
}
$$
with the isomorphism arising from the one for the vector spaces $H^0(X,\sO_X(H))\to H^0(\bP^l,\sO_{\bP^l}(1))$, as 
defined by $i_{|H|}$. Let 

$$
\xymatrix{
\chi^{n-2}_X  \ar[d]_(0.4){\sigma} \ar[rr]  &&    \chi^{n-2}  \ar[d] \\
X \times     \prod\limits_{j=1}^{n-2} |H|           \ar[rr]  &&     \bP^l \times  \Gamma_{n-2}  
}
$$
be the resulting commutative diagram. We define $\mu:= \pr_1\circ \sigma$ 
and use $f:= \pr_2\circ \sigma: \chi^{n-2}_X  \to \prod_{j=1}^{n-2}|H|$ 
to denote the induced proper morphism, with $\pr_k$ denoting the natural projection 
map to the $k$-th factor. 

Now, let $F_0$ be the fiber of $f$ corresponding to $S$. By assumption $\mu^* D |_{F_0}$ is nef. 
Therefore, thanks to openness of amplitude for projective morphisms  
(not necessarily flat) to Noetherian schemes  \cite[Prop.~1.41]{KM98},
we find that $\mu^*D|_{F_t}$ is also nef, for the very general fiber $F_t$, 
proving Item~\autoref{item:nef}.

For Item~\autoref{item:move}, let $B\in \overline{\NE^1}(X)_{\bQ}$ be any pseudo-effective 
class. With the above construction of $W_D^0$, we can find general, inductively constructed 
 $S'\in W^0_D$ such that 
$B|_{S'}$ is pseudo-effective. Therefore, 
$$
B\cdot D \cdot H^{n-2}  =  B|_{S'} \cdot D|_{S'} \geq 0.
$$

Now, since the inequality $B\cdot D\cdot H^{n-2}\geq 0$ holds for any $B\in \overline{\NE^1}(X)_{\bQ}$, 
this means that 
$$
[D\cdot H^{n-2}]  \in \big( \overline{\NE^1} (X)_{\bQ} \big)^*,
$$
that is the cycle $[D\cdot H^{n-2}]$ is dual\footnote{We say $\alpha\in \N_1(X)_{\bQ}$
is dual to $\overline{\NE^1}(X)_{\bQ}$, if, for every $D\in \overline{\NE^1}(X)_{\bQ}$ we have 
$D\cdot \alpha\geq 0$.} to the movable cone. On the other hand, by \cite[Thm.~0.2]{BDPP}
(and standard facts in convex geometry) we know that 
$\big(\overline{\NE^1}(X)_{\bQ}\big)^* = \overline\Mov_1(X)$, 
which finishes the proof.
\end{proof}

\begin{proposition}
\label{prop:star}
In the setting of \autoref{prop:Moish} let $P_{\nl}\in \N^1(S_{\nl})_{\bQ}$ be a nef and big class
with the extension $P \in \N^1(X)_{\bQ}$. Then, $[P\cdot H^{n-2}]\in \overline\Mov_1(X)\cap B^+_H(X)$.
\end{proposition}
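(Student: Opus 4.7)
The plan is to verify the two memberships separately: we show $[P \cdot H^{n-2}] \in \overline{\Mov}_1(X)$ by applying Lemma \ref{lem:move} to $D := P$, and we show $[P \cdot H^{n-2}] \in B^+_H(X)$ by checking directly that $P \in K^+_H(X)$, using that restriction to the Noether--Lefschetz surface $S_{\nl}$ identifies the global intersection numbers with those on a surface where $P_{\nl}$ is nef and big.

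For the movability, the hypothesis of Lemma \ref{lem:move}\ref{item:nef} asks for the existence of \emph{some} complete-intersection surface $S \in \prod_{j=1}^{n-2} |H|$ on which $P|_{S}$ is nef. Taking $S = S_{\nl}$, this is immediate from the setup of Proposition \ref{prop:Moish}: $P$ is by construction the extension of $P_{\nl}$ under the isomorphism $\N^1(X) \to \N^1(S_{\nl})$, so $P|_{S_{\nl}} \equiv P_{\nl}$ is nef. Lemma \ref{lem:move}\ref{item:move} then yields $[P \cdot H^{n-2}] \in \overline{\Mov}_1(X)_\bQ$ directly.

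For the containment in $B^+_H(X)$, I would argue that $P$ itself lies in $K^+_H(X)$, and then $[P\cdot H^{n-2}]$ is its image under the map in the definition of $B^+_H(X)$. One needs the two inequalities $P^2 \cdot H^{n-2} > 0$ and $P \cdot H^{n-1} > 0$. By projection and the identification $\N^1(X) \cong \N^1(S_{\nl})$, both intersections are computed on $S_{\nl}$:
\[
P^2 \cdot H^{n-2} \;=\; (P|_{S_{\nl}})^2 \;=\; P_{\nl}^2,
\qquad
P \cdot H^{n-1} \;=\; P_{\nl} \cdot (H|_{S_{\nl}}).
\]
The first is positive since $P_{\nl}$ is big. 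The second is positive because $P_{\nl}$ is nef (so the intersection is $\geq 0$) and strictly positive by the Hodge index theorem on $S_{\nl}$: if $P_{\nl} \cdot H|_{S_{\nl}}$ vanished, then with $(H|_{S_{\nl}})^2 > 0$ we would get $P_{\nl}^2 \leq 0$, contradicting bigness.

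The step that requires the most care is the second one, since it relies on the fact that the identification of N\'eron--Severi groups from Proposition \ref{prop:Moish}\ref{item:Num} is compatible with intersection products (which it is, by functoriality of restriction), and on invoking Hodge index exactly once on the surface $S_{\nl}$. The rest is essentially a direct application of Lemma \ref{lem:move} and the definitions of $K^+_H$ and $B^+_H$; I do not anticipate any further obstacle.
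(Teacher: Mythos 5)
Your proposal is correct and matches the paper's proof in substance: both verify $P\in K^+_H(X)$ by reducing the two intersection numbers to $S_{\nl}$ (where $P_{\nl}$ is nef and big) and both obtain movability by invoking \autoref{lem:move}\autoref{item:move}. You fill in one detail the paper leaves implicit, namely the Hodge-index argument for $P\cdot H^{n-1}>0$; one tiny remark is that $P^2\cdot H^{n-2}$ equals $\tfrac{1}{m}P_{\nl}^2$ (rather than $P_{\nl}^2$) when the integer $m$ from \autoref{prop:Moish} exceeds $1$, and taking $S=S_{\nl}$ in \autoref{lem:move} likewise presumes $m=1$, but these are harmless since the paper itself reduces to $m=1$ in the application and the factor is positive in any case.
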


\begin{proof}
Since $P_{\nl}$ is nef and big, we have $P^2\cdot H^{n-2}>0$ and $P\cdot H^{n-1}>0$, 
implying that $P\in K^+_H$, i.e. $[P\cdot H^{n-2}] \in B^+_H(X)$. 
The rest follows from \autoref{item:move}.
\end{proof}

\section{Miyaoka--Yau-type inequalities for non-uniruled varieties}
\label{sect:Section4-Inequalities}

We now proceed to the proof of \autoref{thm:main2}. 

\subsection{The general type case}
\label{gt}
Assume that $(X,D)$ is a log-smooth pair of log-general type. 
We may assume that $H$ is very ample 
and that the integer $m$ in \autoref{prop:Moish} is equal to $1$. 
Let $S_{\nl}$ be a smooth complete-intersection surface as constructed 
in \autoref{prop:Moish}. Note that for a suitable choice of $S_{\nl}$ we can ensure that 
$(K_X+D)|_{S_{\nl}}$ is big.  

Let $(K_X+D)|_{S_{\nl}} = P_{\nl} + N_{\nl}$ be the $\sigma$-decomposition 
 (which coincides with the Zariski decomposition by \autoref{rk:ZD}). 
 Let $P$ be the extension of $P_{\nl}$ under the isomorphism 
 $\N^1(X)_{\bQ}\to \N^1(S_{\nl})_{\bQ}$ and define $N:= (K_X+D)-P$.
 We note that with $P_{\nl}$ being nef and big, using \autoref{prop:star}, we have 
 $$
 \gamma: = [P\cdot H^{n-2}]  \in \overline\Mov_1(X) \cap B^+_H(X) .
 $$
 Therefore, thanks to \cite[Thm.~1.3]{CP16}, for every strictly adapted morphism $f: Y\to X$ as in 
 \autoref{ex:Kaw}, with the ramification locus given by 
 $\supp(\lfloor D\rfloor+ A)$, for some very ample divisor $A$, the orbifold cotangent sheaf
 $\Omega^1_{(Y, f , D)}$ is semipositive with respect to $f^*\gamma$. This means that for every 
 torsion free quotient $\sQ$ of $\Omega^1_{(Y,f,D)}$ 
 we have $c_1(\sQ)\cdot f^*\gamma \geq 0$. 
 
 Now, if $\Omega^1_{(Y,f,D)}$ is semistable with respect to $f^*\gamma$, then  
 by \autoref{thm:Langer} and \autoref{rmk:Langer} we have 
 $\widehat \Delta_B(\Omega^1_{(Y, f ,D)})\cdot H^{n-2}\geq 0$
 (see \autoref{rk:Bog}). Straightforward calculations, using the fact that $N^2\cdot H^{n-2}<0$, 
 then show that from this inequality 
 we can deduce $(3 \widehat c_2 - \widehat c_1^2)(X,D) \cdot H^{n-2}\geq N^2\cdot H^{n-2}$.
 We may thus assume that $\Omega^1_{(Y,f,D)}$ is \emph{not} semistable with respect to $f^*\gamma$. 
 

Define $G:= \Gal(Y/X)$. Let 
$$
(\sE_i)_{0\leq i\leq t} \subseteq  \Omega^1_{(Y,f,D)}  \;\;\; ,  \;\;\;\;\; \text{with $\sE_0 = 0$, $\sE_t = \Omega^1_{(Y,f,D)}$ and $t>1$,}
$$
be the increasing Harder-Narasimhan filtration with respect to $f^*\gamma= f^*(P\cdot H^{n-2})$.
For $1\leq i\leq t$, denote the torsion free, semistable quotients of this filtration by $\sQ_i:=\sE_i/\sE_{i-1}$, 
and set $r_i:=\rank(\sQ_i)$. As each $\sE_i$ is unique, it is equipped with a natural 
structure of a $G$-sheaf, and thus so is each $\sQ_i$.

According to \autoref{thm:Langer}, \autoref{rmk:Langer} and \autoref{rk:Bog}, for every $i$, we have 
\begin{equation}\label{eq:quotient}
\widehat \Delta_B(\sQ_i)  \cdot H^{n-2}  \geq 0.
\end{equation}
For the rest of this subsection we will closely follow the arguments of \cite[Prop.~7.1]{Miyaoka87}, 
adapting them to our setting by using the results in \autoref{sect:Section3-Moving}. 

\subsubsection{Step 1: A lower bound for $(3\widehat c^2 - \widehat c_1^2)(\sE_t)$ in terms of 
$(3\widehat c^2 - \widehat c_1^2)(\sE_1)$}
For $1\leq i \leq t$, define $\alpha_i\in \bQ$ by the equality 
$$
r_i \alpha_i =  \frac{\widehat c_1(\sQ_i)\cdot \gamma}{P^2 \cdot H^{n-2}}.
$$
Using $P\cdot N\cdot H^{n-2}=0$, this implies that 
\begin{equation}\label{eq:ONE}
\sum_{i=1}^{t} r_i\alpha_i  =  \frac{(K_X+D) \cdot \gamma}{P^2\cdot H^{n-2}}  =  \frac{(P+N)\cdot P\cdot H^{n-2}}{P^2\cdot H^{n-2}} = 1 .
\end{equation}

Moreover, as $\sE_t$ is semipositive with respect to $f^*\gamma$, we find $\alpha_t\geq 0$.
On the other hand, with $(\sE_i)_{0\leq i\leq t}\subseteq \Omega^1_{(Y,f,D)}$ being the Harder-Narasimhan filtration, 
by construction we have $\mu_{f^*\gamma}(\sQ_i) > \mu_{f^*\gamma}(\sQ_{i+1})$,  
which implies that 
\begin{equation}
\label{eq:positive}
\alpha_1 > \alpha_2 > \ldots > \alpha_t \geq 0 .
\end{equation}
Furthermore, with $W^0_P$ as in \autoref{lem:move},
we can find $S\in W_P^0$ such that 
the restriction of every $\sQ_i|_{\widehat S}$ is torsion free and that $\widehat S : = f^{-1}S$ is smooth.
Using  Item~\autoref{item:nef} we can then apply the Hodge index theorem for surfaces to conclude  
$$
c_1^2(\sQ_i|_{\widehat S}) \cdot (f^*P|_{\widehat S})^2   \leq    \big( c_1(\sQ_i|_{\widehat S})  \cdot  f^*P|_{\widehat S} \big)^2 .
$$
By writing this latter inequality in terms of orbifold Chern classes, we get 
$$
\big(  \widehat c_1^2 (\sQ_i) \cdot H^{n-2} \big)  ( P^2\cdot H^{n-2} )  \leq  \big(  \widehat c_1(\sQ_i) \cdot P\cdot H^{n-2}  \big)^2  ,
$$
which implies that 
\begin{equation}\label{eq:HI}
- \widehat c_1^2(\sQ_i) \cdot H^{n-2}  \geq - P^2 \cdot H^{n-2}  (r_i\alpha_i)^2 .
\end{equation}

We now consider 
\begin{equation}
\label{eq:first}
\big( 6 \widehat c_2  - 2 \widehat c_1^2\big) (\sE_t)  = \sum_{i=1}^t 6 \widehat c_2(\sQ_i) + 
                                6 \sum_{i<j} \widehat c_1(\sQ_i) \cdot \widehat c_1(\sQ_j)  - 2 \widehat c_1^2(\sE_t) . 
\end{equation}
Using
$$
\widehat c_1^2(\sE_t)  =  \sum_{i=1}^t \widehat c_1^2(\sQ_i)  +  2 \sum_{i<j} \widehat c_1(\sQ_i) \cdot \widehat c_1(\sQ_j) ,
 $$
we can then rewrite \autoref{eq:first} as 
\begin{align*}
\big( 6 \widehat c_2  - 2 \widehat c_1^2\big) (\sE_t)  &  =  \sum_{i=1}^t \big(  6 \widehat c_2 - 3 \widehat c_1^2  \big) (\sQ_i) + \widehat c_1^2(\sE_t) \\
                       & = 3 \sum_{i>1}^t \big( 2  \widehat c_2 - \widehat c_1^2 \big) (\sQ_i)  +  \big( 6 \widehat c_2 - 3 \widehat c_1^2 \big) (\sE_1)
                          +  \widehat c_1^2(\sE_t)  .
\end{align*}
Consequently, using the Bogomolov inequality \autoref{eq:quotient} we have 
$$
\big( 6 \widehat c_2  - 2 \widehat c_1^2\big) (\sE_t) \cdot H^{n-2}  \geq  \Big[  - 3 \sum_{i>1}^t \frac{1}{r_i}  \widehat c_1^2(\sQ_i) 
                      + ( 6 \widehat c_2 - 3 \widehat c_1^2) (\sE_1) + P^2  \Big]\cdot H^{n-2}  + N^2\cdot H^{n-2}. 
$$
By \autoref{eq:HI} it thus follows that 
$$
\big( 6 \widehat c_2  - 2 \widehat c_1^2\big) (\sE_t) \cdot H^{n-2}  \geq  \Big[  - 3 \sum_{i>1}^t \frac{1}{r_i}   P^2(r_i\alpha_i)^2 
                      + ( 6 \widehat c_2 - 3 \widehat c_1^2) (\sE_1) + P^2  \Big]\cdot H^{n-2}  + N^2\cdot H^{n-2}. 
$$
That is, we have 
\begin{equation}\label{STAR}
\big( 6 \widehat c_2  - 2 \widehat c_1^2\big) (\sE_t) \cdot H^{n-2}  \geq  \Big[  P^2 \big( 1 - 3 \sum_{i>1}^t r_i \alpha_i^2 \big)  + 
(6 \widehat c_2 - 3 \widehat c_1^2 ) (\sE_1)    \Big] \cdot H^{n-2} + N^2\cdot H^{n-2}. 
\end{equation}

\subsubsection{Step. 2: Analysis of $( 3 \widehat c_2 -  \widehat c_1^2  )(\sE_1)$ based on $\rank(\sE_1)$}
We now study the inequality \autoref{STAR} depending on $\rank(\sE_1)$.

\begin{claim}\label{claim:3}
If $\rank(\sE_1)\geq 3$, then $(3 \widehat c_2 - \widehat c_1^2)(\sE_t) \cdot H^{n-2}\geq \frac{1}{2}(N^2\cdot H^{n-2})$.
\end{claim}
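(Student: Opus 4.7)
The plan is to combine inequality \eqref{STAR} with Bogomolov and the Hodge--index bound on the first Harder--Narasimhan piece $\sE_1$. Doubling the target, the claim is equivalent to
\[
(6\widehat c_2 - 2\widehat c_1^2)(\sE_t)\cdot H^{n-2} \;\geq\; N^2\cdot H^{n-2},
\]
so by \eqref{STAR} the whole problem reduces to showing the nonnegativity of
\[
\Bigl[\, P^2\bigl(1 - 3\textstyle\sum_{i>1} r_i\alpha_i^2\bigr) + (6\widehat c_2 - 3\widehat c_1^2)(\sE_1) \,\Bigr]\cdot H^{n-2}.
\]

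The key step is to control the $\sE_1$-contribution so as to complete the sum $\sum_{i>1} r_i\alpha_i^2$ to $\sum_{i\geq 1} r_i\alpha_i^2$. Since $\sE_1$ is $f^*\gamma$-semistable with $f^*\gamma\in B^+_H(X)$, \autoref{rk:Bog} supplies the Bogomolov inequality $\widehat c_2(\sE_1)\cdot H^{n-2}\geq \frac{r_1-1}{2r_1}\widehat c_1^2(\sE_1)\cdot H^{n-2}$, which rearranges into
\[
(6\widehat c_2 - 3\widehat c_1^2)(\sE_1)\cdot H^{n-2} \;\geq\; -\tfrac{3}{r_1}\widehat c_1^2(\sE_1)\cdot H^{n-2} \;\geq\; -3 r_1\alpha_1^2\,P^2\cdot H^{n-2},
\]
the second inequality being \eqref{eq:HI} applied to $i=1$. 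Substituting back, the bracket is bounded below by $P^2\cdot H^{n-2}\bigl(1 - 3\sum_{i=1}^{t} r_i\alpha_i^2\bigr)$. Since $P_{\nl}$ is nef and big, $P^2\cdot H^{n-2}>0$, so the proof is reduced to the purely numerical inequality $\sum_{i=1}^{t} r_i\alpha_i^2 \leq \tfrac{1}{3}$.

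This is where the hypothesis $r_1\geq 3$ enters. By \eqref{eq:positive} all $\alpha_i\geq 0$, and \eqref{eq:ONE} forces $r_1\alpha_1\leq \sum r_i\alpha_i = 1$, so $\alpha_1\leq 1/r_1\leq 1/3$. As $\alpha_i$ is decreasing,
\[
\sum_{i=1}^{t} r_i\alpha_i^2 \;\leq\; \alpha_1\sum_{i=1}^{t} r_i\alpha_i \;=\; \alpha_1 \;\leq\; \tfrac{1}{3},
\]
which finishes the argument. The one delicate point, rather than an obstacle, is the pairing of Bogomolov for $\sE_1$ with \eqref{eq:HI}: Bogomolov alone cannot absorb the $-3\widehat c_1^2(\sE_1)$ term left over in \eqref{STAR}, and it is exactly the Hodge-index factor $(r_1\alpha_1)^2$ that converts the residual $i>1$ summation into the full $i\geq 1$ one needed to exploit $r_1\geq 3$.
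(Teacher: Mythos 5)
Your argument is correct and essentially identical to the paper's proof: both apply the Bogomolov inequality \eqref{eq:quotient} to $\sE_1=\sQ_1$ together with the Hodge-index bound \eqref{eq:HI} at $i=1$ to complete the sum $\sum_{i>1}r_i\alpha_i^2$ to $\sum_{i\geq 1}r_i\alpha_i^2$, then bound this by $\alpha_1\sum r_i\alpha_i=\alpha_1\leq 1/r_1\leq 1/3$ using $r_1\geq 3$ and the normalization \eqref{eq:ONE}. Your rephrasing of the numerical step as the explicit inequality $\sum r_i\alpha_i^2\leq 1/3$ is a clean way to present what the paper does implicitly.
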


\noindent
\emph{Proof of \autoref{claim:3}.}
Using \autoref{eq:quotient} for $\sE_1 = \sQ_1$ and \autoref{eq:HI} for $i=1$, from 
\autoref{STAR} it follows that 
$$
\big( 6 \widehat c_2  - 2 \widehat c_1^2\big) (\sE_t) \cdot H^{n-2}  \geq  \Big[  P^2 \big( 1 - 3 \sum_{i=1}^t r_i \alpha_i^2 \big)   
   \Big] \cdot H^{n-2} + N^2\cdot H^{n-2}. 
$$
On the other hand, by \autoref{eq:positive} we have $\alpha_1> \alpha_i$, for every $2\leq i\leq t$.
We thus find 
\begin{align*}
\big(  1 - 3 \sum_{i=1}^t r_i \alpha_i^2  \big)  P^2 \cdot H^{n-2}    &  \geq  \big( 1 - 3 \alpha_1 \sum_{i=1}^t  r_i \alpha_i  \big) P^2\cdot H^{n-2} \\
                                                    &  = \big( 1- 3\alpha_1\big) P^2\cdot H^{n-2} &  \text{by \autoref{eq:ONE}},
\end{align*}
so that 
$$
\big(6 \widehat c_2  - 2 \widehat c_1^2\big) (\sE_t)  \geq  \big( 1- 3\alpha_1 \big) P^2\cdot H^{n-2} + N^2\cdot H^{n-2}  .
$$
Using the assumption $r_1\geq 3$, the equality \autoref{eq:ONE}, and $\alpha_i\geq 0$, it follows that 
$\alpha_1\leq \frac{1}{3}$, i.e. $1- 3\alpha_1 \geq 0$, proving the claim. \qed

It remains to consider the case where $\rank(\sE_1)\leq 2$. To do so, we consider 
the short exact sequence 
\begin{equation}\label{eq:SE}
\xymatrix{
0  \ar[r] & \sE_1  \ar[r] &  \Omega^1_{(Y,f,D)} \ar[r] &  \sQ \ar[r]  & 0 ,
}
\end{equation}
with $\sQ$ being the torsion free quotient sheaf. 

\begin{claim}\label{claim:normal}
Let $W_P^0$ be as in \autoref{lem:move}. We can find $S\in W^0_P$ such that 
\begin{enumerate}
\item \label{i} the pair $\big(S, (D+A)|_S  \big)$ is log-smooth and thus so is $\big( \widehat S , D_{\widehat S}: = (f^*D_S)_{\red}   \big)$,
with $\widehat S:= f^{-1}S$, $D_S:= D|_S$, 
\item \label{ii} $\sE_1|_{\widehat S}$ is locally free, and 
\item \label{iii} the support of $\Omega^1_{(\widehat S, f, D_S)}\cap \sQ|_{\widehat S}$ 
is a proper subset of $\widehat S$, where $\Omega^1_{(\widehat S, f, D_S)}$ 
is the orbifold cotangent sheaf 
associated to $f|_{\widehat S}: \widehat S\to (S, D_S)$.
\end{enumerate}
\end{claim}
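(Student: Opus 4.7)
The plan is to exhibit each of \autoref{i}--\autoref{iii} as a generic condition on $S \in W_P^0$ (i.e.\ the complement of a countable union of thin subsets of $W_P^0$), and then to intersect the three resulting loci to extract a single surface satisfying all three requirements simultaneously. Since $W_P^0$ is already defined as the complement of a countable union of closed subvarieties in $\prod_{j=1}^{n-2}|H|$ by \autoref{lem:move}, adding finitely or countably many further generic conditions preserves its density.

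For \autoref{i}, I would apply Bertini iteratively to the hyperplanes $H_1, \dots, H_{n-2} \in |H|$ used to cut out $S$, invoking that $(X, \lceil D \rceil + A)$ is log-smooth by construction, where $A$ is the auxiliary very ample divisor in the Bloch--Gieseker / Kawamata cover $f$ (cf.\ \autoref{ex:Kaw}). A very general choice of the $H_i$ produces a smooth $S$ meeting every stratum of $\supp(\lceil D \rceil + A)$ transversally, so $(S, (D+A)|_S)$ is log-smooth. Since $f$ is strictly adapted and ramified exactly along $\supp(\lfloor D \rfloor + A)$, base-change along the transverse section $S \hookrightarrow X$ yields a strictly adapted morphism $f|_{\widehat S} : \widehat S \to (S, D_S)$, and the log-smoothness of $(\widehat S, D_{\widehat S})$ follows.

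For \autoref{ii}, the sheaf $\sE_1$ is torsion-free on the smooth variety $Y$, so its non-locally-free locus $\Sigma \subset Y$ has $\codim_Y \Sigma \geq 2$. Since $f$ is finite, $f(\Sigma) \subset X$ also has codimension $\geq 2$. A further generic restriction on the $H_i$ (an open condition in each $|H|$) ensures $S \cap f(\Sigma) = \emptyset$, hence $\widehat S \cap \Sigma = \emptyset$ and $\sE_1|_{\widehat S}$ is locally free. For \autoref{iii}, the key ingredient is an orbifold conormal-type short exact sequence
\[
0 \longrightarrow \sF \longrightarrow \Omega^1_{(Y,f,D)}\big|_{\widehat S} \longrightarrow \Omega^1_{(\widehat S, f, D_S)} \longrightarrow 0,
\]
where $\sF$ is (a twist of) the conormal sheaf of $\widehat S$ in $Y$; this is valid as soon as $\widehat S$ is transverse to the boundary divisors of $(Y, D_Y)$, a condition already secured by \autoref{i}. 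Restricting the short exact sequence \autoref{eq:SE} to $\widehat S$ and chasing the resulting diagram produces a canonical morphism $\sQ|_{\widehat S} \to \Omega^1_{(\widehat S, f, D_S)}$, whose degeneracy locus records the intersection referred to in \autoref{iii}. Because $\sE_1$ is a saturated subsheaf of $\Omega^1_{(Y,f,D)}$ of strictly smaller rank, for very general $\widehat S$ the composite $\sE_1|_{\widehat S} \to \Omega^1_{(\widehat S, f, D_S)}$ attains maximal possible rank at the generic point, which forces the relevant degeneracy locus to be a proper closed subset of $\widehat S$.

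The main obstacle is \autoref{iii}: parts \autoref{i} and \autoref{ii} are immediate consequences of Bertini and generic flatness, but \autoref{iii} requires a careful setup of the orbifold conormal sequence in the adapted cover, together with the verification that the Harder--Narasimhan subsheaf $\sE_1$ sits in sufficiently generic position relative to $\widehat S$. Once this geometric input is established, the intersection of the three generic conditions over a dense subset of $W_P^0$ yields the desired surface $S$.
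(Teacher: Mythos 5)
Parts \autoref{i} and \autoref{ii} of your proposal are correct and run along the same lines as the paper: repeated Bertini/Lefschetz for log-smoothness and transversality, and the observation that $\sE_1$ is reflexive (hence locally free away from a codimension-$\geq 2$ set that a very general $S$ avoids). The issue is with \autoref{iii}, which you yourself flag as the ``main obstacle'' but never actually close.

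Two specific problems. First, restricting \autoref{eq:SE} to $\widehat S$ and ``chasing the diagram'' does \emph{not} produce a canonical morphism $\sQ|_{\widehat S} \to \Omega^1_{(\widehat S,f,D_S)}$. Both $\sQ|_{\widehat S}$ and $\Omega^1_{(\widehat S,f,D_S)}$ are \emph{quotients} of $\Omega^1_{(Y,f,D)}|_{\widehat S}$ (with kernels, generically, the image of $\sE_1|_{\widehat S}$ and the conormal sheaf $\ker \alpha_N$, respectively). A map from one quotient to the other only exists when one kernel is contained in the other, which in the relevant direction would amount to $\sE_1|_{\widehat S}\subseteq \ker \alpha_N$---exactly the degenerate situation you are trying to rule out. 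The object whose degeneracy one needs to control is rather the composite $\sE_1|_{\widehat S}\to \Omega^1_{(\widehat S,f,D_S)}$, i.e.\ one must show $\sE_1|_{\widehat S}\cap \ker\alpha_N$ has proper support. Second, and more seriously, the crucial assertion---that for very general $\widehat S$ this composite has maximal rank generically---is stated as if it were automatic, but it is precisely the substance of \autoref{iii}. Saturatedness of $\sE_1$ and a rank count do not by themselves produce the required transversality: $\widehat S=f^{-1}(S)$ and $\sE_1$ are \emph{both} fixed on $Y$, so one must actually vary $S$ in the family and argue that the conormal directions move into general position against $\sE_1$. The paper supplies this missing input by passing to the dual picture: after removing a closed subscheme of $Y$, the surjection in \autoref{eq:SE} exhibits $\sQ^*$ locally analytically as a direct sum of rank-one (automatically integrable) foliations; choosing $\widehat S$ transversal to the associated leaves is a genuine open/generic condition on $S$, and Nakayama's lemma then upgrades transversality at a point to proper support of the intersection sheaf. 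Your write-up gestures at the right target but omits this foliation-theoretic argument, which is the actual content of the step.
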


\noindent
\emph{Proof of Claim~\ref{claim:normal}.}
As $\big(\prod_{j=1}^{n-2} |H| \big) \backslash W_P^0$ consists of a union of only countable number of closed subsets, 
by a successive application of the Lefschetz hyperplane theorem, for a general member 
of $|H|$, items~\autoref{i} and \autoref{ii} are guaranteed to hold (note that $\sE_1$
is reflexive and thus locally free in codimension two). 
Same is true for Item~\autoref{iii} by the following observation:  
after removing a closed subscheme of $Y$, the surjection in \autoref{eq:SE} 
defines $\sQ^*$, locally analytically, as 
a sum of rank one foliations (trivially integrable). Therefore,  by choosing
$\widehat S$ transversal to the associated leaves, and using Nakayama's lemma, we can ensure that 
$\Omega^1_{(\widehat S, f, D_s)} \cap \sQ|_{\widehat S}$ has proper support.\qed

Now, by \autoref{claim:normal}, the composition 
\begin{equation}\label{compose}
\xymatrix{
\sE_{1, \widehat S}: = \sE_1|_{\widehat S} \ar[r] &  \Omega^1_{(Y, f, D)}|_{\widehat S} \ar[r]^{\alpha_N}  &  \Omega^1_{(\widehat S, f, D_S)}
}
\end{equation}
is generically injective, where $\alpha_N$ is naturally defined by the orbifold conormal 
bundle sequence. Since $\sE_1|_{\widehat S}$ is torsion free, it follows that the map \autoref{compose} 
is injective over $\widehat S$. We now consider two cases depending on $r_1$.

\subsubsection*{Case I: $\rank(\sE_1)=2$}
Using the injection \autoref{compose} 
$$
\sE_{1, \widehat S} \hooklongrightarrow \Omega^1_{(\widehat S, f, D_S)} \subseteq  \Omega^1_{\widehat S} (\log D_{\widehat S}), 
$$
according to \cite[Rk.~4.18]{Mi84}, we either have 
$(3 c_2 -  c_1^2) (\sE_{1, \widehat S}) \geq 0$, or $\kappa(\sE_{1, \widehat S}):= \kappa(\det \sE_{1, \widehat S})<0$.

If $(3 \widehat c_2 - \widehat c_1^2)(\sE_1) \cdot H^{n-2}\geq 0$, then by \autoref{STAR} we have 
\begin{align}\label{casei}
\big( 6 \widehat c_2 - 2 \widehat c_1^2\big)  \cdot H^{n-2}  &  \geq  \big[  P^2( 1- 3 \sum_{i>1}^t r_i\alpha_i^2 )  - \widehat c_1^2(\sE_1) \big]\cdot H^{n-2}  +  N^2\cdot H^{n-2} \nonumber \\
                  &  \geq  \big[   P^2(1 - 3\sum_{i>1}^t r_i\alpha_i^2 ) - P^2(r_1\alpha_1)^2 \big]\cdot H^{n-2} + N^2\cdot H^{n-2} & \text{by \autoref{eq:HI}} \nonumber \\
                  & = \Big( P^2\big(   1- 4\alpha^2  - 3\sum_{i>1}^t r_i\alpha_i^2  \big)  \Big) \cdot H^{n-2}  + N^2\cdot H^{n-2} & \text{as $r_1$=2} 
                  \nonumber \\
                  & \geq P^2 \big(   1 - 4\alpha_1^2 - 3\alpha_2 \sum_{i>1}^t r_i\alpha_i \big) \cdot H^{n-2}  + N^2\cdot H^{n-2} & \text{by \autoref{eq:positive}}  \nonumber \\
                  & =  P^2\big(  1 - 4\alpha_1^2 - 3\alpha_2(1- 2\alpha_1)  \big)\cdot H^{n-2} + N^2\cdot H^{n-2} & \text{by \autoref{eq:ONE}} \nonumber \\
                  & = P^2(1- 2\alpha_1) ( 1+ 2\alpha_1 - 3\alpha_2 ) \cdot H^{n-2}  + N^2\cdot H^{n-2}  .
\end{align}
On the other hand, using \autoref{eq:positive} we have 
\begin{align*}
3\alpha_2 \leq 2 \alpha_2 + r_2\alpha_2  & < 2 \alpha_1 + r_2 \alpha_2  \\
                                        &  \leq 1  & \text{by \autoref{eq:ONE}} ,
\end{align*}
implying that $1- 3\alpha_2 \geq 0$. Furthermore, again by \autoref{eq:ONE}, we have $2\alpha_1\leq 1$, i.e. $1- 2\alpha_1 \geq 0$. 
Going back to \autoref{casei} we now find 
\begin{align*}
\big( 6 \widehat c_2 - 2 \widehat c_1^2\big)  \cdot H^{n-2}   &  \geq 2 \alpha_1 \cdot P^2 \cdot H^{n-2} + N^2\cdot H^{n-2} \\
                                   & \geq N^2 \cdot H^{n-2},
\end{align*}
establishing our desired inequality. 

We now assume that $\kappa(\sE_{1, \widehat S})<0$. 
As $S\in W^0_P$, the restriction $P|_S$ is nef and thus so is 
$f^*P|_{\widehat S}$. Moreover, as $\alpha_1>0$, we have 
\begin{equation}\label{nef}
c_1(\sE_{1, \widehat S})\cdot f^*P|_{\widehat S}>0
\end{equation}
Using Riemann-Roch we thus get $\widehat c_1^2(\sE_{1, \widehat S})\leq 0$.
Going back to \autoref{STAR} we get 
\begin{align*}
\big(  6 \widehat c_2 - 2 \widehat c_1^2 \big) (\sE_t) \cdot H^{n-2} &  \geq  \big[ P^2(1- 3\sum_{i>1}^t r_i\alpha_i^2 ) + 
  3  ( 2\widehat c_2 - \widehat c_1^2 ) (\sE_1) \big] \cdot H^{n-2} + N^2\cdot H^{n-2}  \\
 \text{by \autoref{eq:quotient}}  \;\;\;\;\;\;\;\;\;\;\;\; \;\;\;\;\;\; & \geq  \big[ P^2(1- 3\sum_{i>1}^t r_i\alpha_i^2 ) + 
  3  ( \frac{-1}{r_1}) \widehat c_1^2(\sE_1) \big] \cdot H^{n-2} + N^2\cdot H^{n-2}  \\
  &  \geq P^2\big( 1 - 3 \sum_{i>1}^t r_i\alpha_i^2  \big)  \cdot H^{n-2}   +  N^2\cdot H^{n-2}   \\
  &  \geq P^2\big(  1- 3\alpha_2 \sum_{i>1}^t r_i\alpha_i \big)  \cdot H^{n-2}   + N^2\cdot H^{n-2} \\
  &  = P^2 \big(  1 -3\alpha_2 (1-2\alpha_1)   \big) \cdot H^{n-2} + \N^2\cdot H^{n-2} \;\;\;\;\;\;\;\;\;\;\;\;  \text{by \autoref{eq:ONE}} \\
  \text{by \autoref{eq:positive}} \;\;\;\;\;\;\;\;\;\; \;\;\;\;\;\;\;\; & \geq  P^2 \underbrace{\big(   1 - 3\alpha_1(1 - 2\alpha_1)  \big)}_{1- 3\alpha_1 + 6\alpha_1^2 = 6 (\alpha_1-\frac{1}{4})^2  + \frac{5}{8} \;\;\;\;\;\; > \; 0 } \cdot \; H^{n-2}  + N^2\cdot H^{n-2}   
\end{align*}

\subsubsection*{Case II: $\rank(\sE_1)=1$} 
Again, by using the injection \autoref{compose}, we have 
$\sE_{1, \widehat S} \hookrightarrow \Omega^1_{\widehat S}(\log D_{\widehat S})$. 
Therefore, thanks to Bogomolov-Sommese vanishing \cite{Bog79}, \cite{Miy77}, \cite{SS1985} 
(see also \cite{ProcHodge} and \cite{EV92} for generalizations), 
we have $\kappa(\widehat S, \sE_{1, \widehat S})\leq 1$. 
On the other hand, we have the inequality \autoref{nef}. With $f^*P|_{\widehat S}$ being nef, 
using Riemann-Roch, we thus find $\widehat c_1^2(\sE_{1, \widehat S})\leq 0$. 
Moreover, as $\rank(\sE_{1, \widehat S}) =1$, we have $c_2(\sE_{1, \widehat S})=0$. Now, going back to 
\autoref{STAR} we get 
\begin{align*}
\big(  6 \widehat c_2 - 2 \widehat c_1^2 \big)  (\sE_t) \cdot H^{n-2}  &  \geq  P^2 \big( 1 - 3\sum_{i>1}^t r_i\alpha_i^2  \big) \cdot H^{n-2} 
    + N^2\cdot H^{n-2}  \\
      &  \geq  P^2 \big(  1 -  3\alpha_1 \sum_{i>1}^t r_i\alpha_i  \big)  \cdot H^{n-2}  + N^2\cdot H^{n-2} & \text{by \autoref{eq:positive}} \\
      & = P^2 \big(  1 - 3\alpha_1 (1- \alpha_1)  \big) \cdot H^{n-2}  + N^2 \cdot H^{n-2} & \text{by \autoref{eq:ONE}} \\
      &  = P^2 \Big[  3 \big( (\alpha_1 - \frac{1}{2})^2 - \frac{1}{4}   \big)   +1    \Big] \cdot H^{n-2}  + N^2\cdot H^{n-2}  \\
      & = P^2 \big(  3 (\alpha_1 - \frac{1}{2})^2 + \frac{1}{4} \big) \cdot H^{n-2}  + N^2\cdot H^{n-2}  \\
      & \geq N^2\cdot H^{n-2}  ,
\end{align*}
which finishes the proof of the log-general type case.

\subsection{The pseudo-effective case}
Assuming that $K_X+D$ is pseudo-effective, for any very ample divisor $A$ and $m\in \bN$, we consider 
the pair $(X, D+ \frac{1}{m}A)$. 
We may assume that $H$ in the setting of \autoref{thm:main2} is very ample. 
For $1\leq i \leq n-3$, let $H_i\in |H|$ 
be general members such that $S_{\nl}$ is a complete-intersection surface 
as in \autoref{prop:Moish}
and that $(K_X+D)|_{S_{\nl}}$ is pseudo-effective.

By the general type case we know that 
\begin{equation}\label{GT}
\big(  3 \widehat c_2 - \widehat c_1^2  \big) (X,D+ \frac{1}{m} A) \cdot H^{n-2}  \geq  N^2_{\frac{1}{m}} \cdot H^{n-2}  ,
\end{equation}
where $N_{\frac{1}{m}}$ denotes the extension of $N_{\sigma}\big((K_X+D+\frac{1}{m} A )|_{S_{\nl}}\big)$. 
Using the continuity of orbifold Chern numbers \cite[Prop.~3.11]{GT16} and Item~\autoref{item:2}, 
from \autoref{GT} it follows that 
$$
\big(  3\widehat c_2 - \widehat c_1^2   \big) (X,D)  \cdot H^{n-2}  \geq  N^2\cdot H^{n-2},
$$
with $N$ being the extension of $N_{\sigma}\big((K_X+D)|_{S_{\nl}}\big)$.

\subsection{Concluding remarks}
As is evident from the proof of \autoref{thm:main2}, the inequality \autoref{eq:MY2} can be sharpened 
to 
$$
\big( 3\widehat c_2(X,D)  - \widehat c_1^2(X,D) \big) \cdot H^{n-2}  \geq \frac{1}{2} N^2\cdot H^{n-2} .
$$
It would be interesting to know, if this can be improved further by the inequality 
\begin{equation}\label{FINAL}
\big( 3 \widehat c_2(X,D)  -  \widehat c_1^2(X,D)  \big) \cdot H^{n-2}  \geq  \frac{1}{2} \big(1  - \frac{3}{n}  \big) N^2 \cdot H^{n-2}  .
\end{equation}
We note that \autoref{FINAL} coincides with \cite[Rk.~4.18]{Mi84}, when $\dim =2$, and the claimed inequality 
in \cite[p.~498]{LuMi} in higher dimensions. 
We refer to \cite[Rem.~8.2]{RT16} for a brief discussion of gaps in the proof of the latter inequality.

\bibliographystyle{bibliography/skalpha} 
\bibliography{bibliography/general}

\begin{bibdiv}
\begin{biblist}

\bib{BDPP}{article}{
      author={Boucksom, S{\'e}bastien},
      author={Demailly, Jean-Pierre},
      author={P{\u a}un, Mihai},
      author={Peternell, Thomas},
       title={The pseudo-effective cone of a compact {K}{\"a}hler manifold and
  varieties of negative {K}odaira dimension},
        date={2013},
        ISSN={1056-3911},
     journal={J. Algebraic Geom.},
      volume={22},
      number={2},
       pages={201\ndash 248},
        note={\href{http://arxiv.org/abs/math/0405285}{arXiv:math/0405285}},
}

\bib{BG71}{article}{
      author={Bloch, Spencer},
      author={Gieseker, David},
       title={The positivity of the chern classes of an ample vector bundle},
        date={1971},
     journal={Invent. Math.},
      number={12},
       pages={112\ndash 117},
}

\bib{Bog79}{article}{
      author={Bogomolov, F.},
       title={Holomorphic tensors and vector bundles on projective varieties},
        date={1979},
     journal={Math. USSR Izv.},
      volume={13},
       pages={499\ndash 555},
}

\bib{Cam04}{article}{
      author={Campana, Fr{\'e}d{\'e}ric},
       title={Orbifolds, special varieties and classification theory},
        date={2004},
        ISSN={0373-0956},
     journal={Ann. Inst. Fourier (Grenoble)},
      volume={54},
      number={3},
       pages={499\ndash 630},
      review={\MR{MR2097416 (2006c:14013)}},
}

\bib{CKT16}{unpublished}{
      author={Claudon, Beno{\^\i}t},
      author={Kebekus, Stefan},
      author={Taji, Behrouz},
       title={Generic positivity and applications to hyperbolicity of moduli
  spaces},
        date={2016},
        note={Preprint
  \href{http://arxiv.org/abs/1610.09832}{arXiv:1610.09832}},
}

\bib{CP11}{article}{
      author={Campana, Fr{\'e}d{\'e}ric},
      author={Peternell, Thomas},
       title={Geometric stability of the cotangent bundle and the universal
  cover of a projective manifold},
        date={2011},
        ISSN={0037-9484},
     journal={Bull. Soc. Math. France},
      volume={139},
      number={1},
       pages={41\ndash 74},
}

\bib{CP16}{article}{
      author={Campana, Fr{\'e}d{\'e}ric},
      author={P{\u a}un, Mihai},
       title={Foliations with positive slopes and birational stability of
  orbifold cotangent bundles},
        date={2019},
     journal={Inst. Hautes {\'E}tudes Sci. Publ. Math.},
      volume={129},
      number={1},
       pages={1\ndash 49},
        note={\href{https://arxiv.org/abs/1508.02456}{arXiv:1508.02456}},
}

\bib{Deng21}{article}{
      author={Deng, Ya},
       title={A characterization of complex quasi-projective manifolds
  uniformized by unit balls},
        date={2021},
        note={Preprint
  \href{https://arxiv.org/abs/2006.16178}{arxiv.org/abs/2006.16178}},
}

\bib{ProcHodge}{book}{
      author={Esnault, H{\'e}l{\`e}ne},
      author={Viehweg, Eckart},
       title={Vanishing and non-vanishing theorems},
   publisher={Soci{\'e}t{\'e} Math{\'e}matique de France},
     address={Paris},
        date={1989},
        note={Proc. {H}odge {T}heory, Luminy Ast{\'e}risque No. 179--180 (1987)
  (Ed. : Barlet, Elzein, Esnault, Verdier, Viehweg)},
}

\bib{EV92}{book}{
      author={Esnault, H{\'e}l{\`e}ne},
      author={Viehweg, Eckart},
       title={Lectures on vanishing theorems},
      series={DMV Seminar},
   publisher={Birkh{\"a}user Verlag},
     address={Basel},
        date={1992},
      volume={20},
        ISBN={3-7643-2822-3},
      review={\MR{MR1193913 (94a:14017)}},
}

\bib{GKP15}{article}{
      author={Greb, Daniel},
      author={Kebekus, Stefan},
      author={Peternell, Thomas},
       title={Movable curves and semistable sheaves},
        date={2016},
     journal={Int Math Res Notices},
      volume={216},
      number={2},
       pages={536\ndash 570},
         url={http://dx.doi.org/10.1093/imrn/rnv126},
  note={\href{http://dx.doi.org/10.1093/imrn/rnv126}{DOI:10.1093/imrn/rnv126}.
  Preprint \href{http://arxiv.org/abs/1408.4308}{arXiv:1408.4308}.},
}

\bib{GKPT15}{unpublished}{
      author={Greb, Daniel},
      author={Kebekus, Stefan},
      author={Peternell, Thomas},
      author={Taji, Behrouz},
       title={The {M}iyaoka-{Y}au inequality and uniformisation of canonical
  models},
        date={2015},
        note={Preprint
  \href{http://arxiv.org/abs/1511.08822}{arXiv:1511.08822}},
}

\bib{SGA2}{book}{
      author={Grothendieck, Alexander},
       title={Cohomologie locale des faisceaux coh{\'e}rents et
  th{\'e}or{\`e}mes de {L}efschetz locaux et globaux (sga 2)},
   publisher={North-Holland Publishing Co.},
     address={Amsterdam},
        date={1968},
        note={Augment{\'e} d'un expos{\'e} par Mich{\`e}le Raynaud,
  S{\'e}minaire de G{\'e}om{\'e}trie Alg{\'e}brique du Bois-Marie, 1962,
  Advanced Studies in Pure Mathematics, Vol. 2. Also available as
  \href{http://arxiv.org/abs/math/0511279}{arXiv:math/0511279}.},
      review={\MR{0476737 (57 \#16294)}},
}

\bib{GT16}{article}{
      author={Guenancia, Henri},
      author={Taji, Behrouz},
       title={Orbifold stability and {M}iyaoka-{Y}au inequality for minimal
  pairs},
        date={2016},
     journal={Geometry \& Topology (to appear).},
        note={Preprint
  \href{http://arxiv.org/abs/1611.05981}{arXiv:1611.05981}},
}

\bib{MR2665168}{book}{
      author={Huybrechts, Daniel},
      author={Lehn, Manfred},
       title={The geometry of moduli spaces of sheaves},
     edition={Second},
      series={Cambridge Mathematical Library},
   publisher={Cambridge University Press},
     address={Cambridge},
        date={2010},
        ISBN={978-0-521-13420-0},
         url={http://dx.doi.org/10.1017/CBO9780511711985},
      review={\MR{2665168 (2011e:14017)}},
}

\bib{HS20}{article}{
      author={Hao, Feng},
      author={Schreieder, Stefan},
       title={Equality in the {B}ogomolov--{M}iyaoka--{Y}au inequality in the
  non-general type case},
        date={2020},
        note={Preprint
  \href{https://arxiv.org/abs/2003.14020}{arxiv.org/abs/2003.14020}},
}

\bib{MR2860268}{article}{
      author={Jabbusch, Kelly},
      author={Kebekus, Stefan},
       title={Families over special base manifolds and a conjecture of
  {C}ampana},
        date={2011},
        ISSN={0025-5874},
     journal={Math. Z.},
      volume={269},
      number={3-4},
       pages={847\ndash 878},
         url={http://dx.doi.org/10.1007/s00209-010-0758-6},
      review={\MR{2860268 (2012k:14046)}},
}

\bib{KM98}{book}{
      author={Koll{\'a}r, J{\'a}nos},
      author={Mori, Shigefumi},
       title={Birational geometry of algebraic varieties},
      series={Cambridge Tracts in Mathematics},
   publisher={Cambridge University Press},
     address={Cambridge},
        date={1998},
      volume={134},
        ISBN={0-521-63277-3},
      review={\MR{2000b:14018}},
}

\bib{Kob84}{article}{
      author={Kobayashi, Ryoichi},
       title={{K}{\"a}hler-einstein metric on open algebraic manifolds},
        date={1984},
     journal={Osaka. Math.},
      volume={21},
      number={1},
       pages={399\ndash 418},
}

\bib{La01}{article}{
      author={Langer, Adrian},
       title={The {B}ogomolov-{M}iyaoka-{Y}au inequality for log canonical
  surfaces},
        date={2001},
        ISSN={0024-6107},
     journal={J. London Math. Soc. (2)},
      volume={64},
      number={2},
       pages={327\ndash 343},
         url={http://dx.doi.org/10.1112/S0024610701002320},
      review={\MR{1853454 (2002i:14009)}},
}

\bib{Langer04a}{article}{
      author={Langer, Adrian},
       title={Semistable sheaves in positive characteristic},
        date={2004},
        ISSN={0003-486X},
     journal={Ann. of Math. (2)},
      volume={159},
      number={1},
       pages={251\ndash 276},
      review={\MR{2051393 (2005c:14021)}},
}

\bib{Laz04-I}{book}{
      author={Lazarsfeld, Robert},
       title={Positivity in algebraic geometry. {I}},
      series={Ergebnisse der Mathematik und ihrer Grenzgebiete. 3. Folge. A
  Series of Modern Surveys in Mathematics [Results in Mathematics and Related
  Areas. 3rd Series. A Series of Modern Surveys in Mathematics]},
   publisher={Springer-Verlag},
     address={Berlin},
        date={2004},
      volume={48},
        ISBN={3-540-22533-1},
        note={Classical setting: line bundles and linear series},
      review={\MR{2095471 (2005k:14001a)}},
}

\bib{LuMi}{article}{
      author={Lu, Steven Shin-Yi},
      author={Miyaoka, Yoichi},
       title={Bounding codimension-one subvarieties and a general inequality
  between {C}hern numbers},
        date={1997},
        ISSN={0002-9327},
     journal={Amer. J. Math.},
      volume={119},
      number={3},
       pages={487\ndash 502},
  url={http://muse.jhu.edu/journals/american_journal_of_mathematics/v119/119.3lu.pdf},
      review={\MR{1448213 (98d:14008)}},
}

\bib{LY15}{article}{
      author={Li, J.},
      author={Yau, S.-T.},
       title={Hermitian-{Y}ang-{M}ills connection on non-{K}{\"a}hler
  manifolds},
        date={1987},
     journal={Mathematical aspects of string theory (San Diego, Calif., 1986),
  Adv. Ser. Math. Phys., vol. 1, World Sci. Publishing},
       pages={560\ndash 573},
}

\bib{Meg99}{article}{
      author={Megyesi, G{\'a}bor},
       title={Generalisation of the bogomolov-{M}iayoka-{Y}au inequality to
  singular surfaces},
        date={1999},
     journal={Proc. London Math. Soc.},
      volume={78},
      number={3},
       pages={241\ndash 282},
}

\bib{Miy77}{article}{
      author={Miyaoka, Yoichi},
       title={On the chern numbers of surfaces of general type},
        date={1977},
     journal={Invent. Math.},
      volume={42},
       pages={225\ndash 237},
}

\bib{Mi84}{article}{
      author={Miyaoka, Yoichi},
       title={The maximal number of quotient singularities on surfaces with
  given numerical invariants},
        date={1984},
        ISSN={0025-5831},
     journal={Math. Ann.},
      volume={268},
      number={2},
       pages={159\ndash 171},
         url={http://dx.doi.org/10.1007/BF01456083},
      review={\MR{744605 (85j:14060)}},
}

\bib{Miyaoka87}{incollection}{
      author={Miyaoka, Yoichi},
       title={The {C}hern classes and {K}odaira dimension of a minimal
  variety},
        date={1987},
   booktitle={Algebraic geometry, sendai, 1985},
      series={Adv. Stud. Pure Math.},
      volume={10},
   publisher={North-Holland},
     address={Amsterdam},
       pages={449\ndash 476},
      review={\MR{89k:14022}},
}

\bib{Moi67}{article}{
      author={Moishezon, B.},
       title={On algebraic cohomology classes on algebraic varieties},
        date={1967},
     journal={Math. USSR-Izvestia},
      number={1},
       pages={209\ndash 251},
}

\bib{MR717614}{incollection}{
      author={Mumford, David},
       title={Towards an enumerative geometry of the moduli space of curves},
        date={1983},
   booktitle={Arithmetic and geometry, {V}ol. {II}},
      series={Progr. Math.},
      volume={36},
   publisher={Birkh{\"a}user Boston, Boston, MA},
       pages={271\ndash 328},
      review={\MR{717614 (85j:14046)}},
}

\bib{Nakayama04}{book}{
      author={Nakayama, Noboru},
       title={Zariski-decomposition and abundance},
      series={MSJ Memoirs},
   publisher={Mathematical Society of Japan},
     address={Tokyo},
        date={2004},
      volume={14},
        ISBN={4-931469-31-0},
      review={\MR{2104208 (2005h:14015)}},
}

\bib{RT16}{article}{
      author={Rousseau, Erwan},
      author={Taji, Behrouz},
       title={Orbifold chern class inequalities and applications},
       journal = {Ann. Inst. Fourier (Grenoble) -- to appear},
        date={2016},
        note={preprint
  \href{https://arxiv.org/abs/1611.06420}{arXiv:1611.06420}.},
}

\bib{MR944577}{article}{
      author={Simpson, Carlos~T.},
       title={Constructing variations of {H}odge structure using {Y}ang-{M}ills
  theory and applications to uniformization},
        date={1988},
        ISSN={0894-0347},
     journal={J. Amer. Math. Soc.},
      volume={1},
      number={4},
       pages={867\ndash 918},
         url={http://dx.doi.org/10.2307/1990994},
      review={\MR{944577 (90e:58026)}},
}

\bib{SS1985}{article}{
      author={Shiffman, Bernard},
      author={Sommese, A.J.},
       title={Vanishing {T}heorems on {C}omplex {M}anifolds},
        date={1985},
     journal={Progress in Math, Birkh\"auser Boston},
      number={56},
}

\bib{Taji16}{article}{
      author={Taji, Behrouz},
       title={The isotriviality of smooth families of canonically polarized
  manifolds over a special quasi-projective base},
        date={2016},
        ISSN={1570-5846},
     journal={Compositio. Math.},
      volume={152},
       pages={1421\ndash 1434},
         url={http://journals.cambridge.org/article_S0010437X1600734X},
  note={\href{http://journals.cambridge.org/article_S0010437X1600734X}{DOI:10.1112/S0010437X1600734X}},
}

\bib{Tia94}{article}{
      author={Tian, Gang},
       title={{K}{\"a}hler-einstein metrics on algebraic manifolds},
        date={1994},
     journal={Transcendental methods in Algebraic Geometry, Lecture Notes of
  the C.I.M.E},
}

\bib{MR0944606}{article}{
      author={Tsuji, Hajime},
       title={Existence and degeneration of k{\"a}hler-einstein metrics on
  minimal algebraic varieties of general type},
        date={1988},
     journal={Math. Ann.},
      volume={281},
      number={1},
       pages={123\ndash 133},
        note={\href{http://dx.doi.org/10.1007/BF01449219}{DOI:
  10.1007/BF01449219}},
}

\bib{Voisin-Hodge2}{book}{
      author={Voisin, Claire},
       title={Hodge theory and complex algebraic geometry. {II}},
     edition={English},
      series={Cambridge Studies in Advanced Mathematics},
   publisher={Cambridge University Press},
     address={Cambridge},
        date={2007},
      volume={77},
}

\bib{Wah94}{article}{
      author={Wahl, J.},
       title={Miyaoka-{Y}au inequality for normal surfaces and local
  analogues},
        date={1994},
     journal={Classification of algebraic varieties (L'Aquila, 1992) (eds C.
  Ciliberto, E. L. Livorni and A. J. Sommese) Contemporary Mathematics 162
  (American Mathematical Society, Providence, R.I.)},
       pages={381\ndash 402},
}

\bib{MR0451180}{article}{
      author={Yau, Shing-Tung},
       title={Calabi's conjecture and some new results in algebraic geometry},
        date={1977},
        ISSN={0027-8424},
     journal={Proc. Nat. Acad. Sci. U.S.A.},
      volume={74},
      number={5},
       pages={1798\ndash 1799},
      review={\MR{0451180 (56 \#9467)}},
}

\end{biblist}
\end{bibdiv}

\end{document}
